\newcommand*{\rom}[1]{\expandafter\@slowromancap\romannumeral #1@}
\renewcommand{\Im}{{\operatorname{Im}\,}}
\newcommand{\sgn}{\operatorname{sgn}}
\renewcommand{\epsilon}{\varepsilon}
\newcommand{\mc}[1]{\mathcal{#1}}
\newcommand{\e}{\varepsilon}
\newcommand{\re}{\mathbb{R}}
\newcommand{\supp}{{\operatorname{supp\,}}}
\newtheorem{theorem}{{\sc Theorem}}
\newtheorem{cor}{{\sc Corollary}}[section]
\newtheorem{definition}[cor]{{\sc Definition}}
\newtheorem{lemma}[cor]{{\sc Lemma}}
\newtheorem{conjecture}[cor]{{\sc Conjecture}}
\numberwithin{equation}{section}
\newenvironment{rem}[1][]{\refstepcounter{cor}{\medskip\noindent{{\sc{Remark}}{\bf~\thecor \/#1.}} }}{\par}
\title{Domains without dense Steklov nodal sets}
\author{Oscar Bruno}
\address{Department of Computing and Mathematical Sciences, Caltech, Pasadena, CA USA}
\email{obruno@caltech.edu}
\author{Jeffrey Galkowski}
\address{Department of Mathematics, Northeastern, Boston, MA USA}
\email{jeffrey.galkowski@northeastern.edu }
\date{}
\begin{document}
\begin{abstract}
  This {article} concerns the asymptotic geometric character of the
  nodal set of the eigenfunctions of the Steklov eigenvalue problem
$$
-\Delta \phi_{\sigma_j}=0,\quad\text{ on
}\Omega,\qquad\qquad \partial_\nu \phi_{\sigma_j}=\sigma_j
\phi_{\sigma_j}\quad \text{ on }\partial\Omega
$$
in two-dimensional domains $\Omega$. In particular, this paper
presents a dense family $\mc{A}$ of simply-connected two-dimensional
domains with analytic boundaries such that, for each
$\Omega\in \mc{A}$, the nodal set of the eigenfunction
$\phi_{\sigma_j}$ ``is \emph{not} dense at scale
$\sigma_j^{-1}$''. This result addresses a question put forth under
``Open Problem 10'' in Girouard and Polterovich, J.  Spectr. Theory,
321-359 (2017).  In fact{,} the results in the present paper establish
that, for domains $\Omega\in \mc{A}$, the nodal sets of the
eigenfunctions $\phi_{\sigma_j}$ associated with the eigenvalue
$\sigma_j$ have {starkly} different character than anticipated: they
are not dense at any {shrinking} scale. More precisely, for each
$\Omega\in \mc{A}$ there is a value $r_1>0$ such that {for each} $j$
{there is $x_j\in \Omega$} such that $\phi_{\sigma_j}$ does not vanish
on {the ball of radius $r_1$ around $x_j$}.
\end{abstract}

\maketitle

\section{Introduction}

Let $(M,g)$ be a compact Riemannian manifold with piecewise smooth
boundary $\partial M$. The Steklov problem is given by
\begin{equation}
\label{e:steklovM}
\begin{cases}
-\Delta_g \phi_{\sigma}=0&\text{in }M\\
\partial_\nu \phi_{\sigma}=\sigma \phi_{\sigma}&\text{on }\partial M.
\end{cases}
\end{equation}
There is a discrete sequence
$0=\sigma_0<\sigma_1\leq \sigma_2\leq \dots$ of values of $\sigma$,
with $\sigma_j\to \infty$ as $j\to \infty$, for which non-trivial
solutions satisfying~\eqref{e:steklovM} exist~\cite{HiLu}. These are
the \emph{Steklov eigenvalues} and the corresponding functions
$\phi_{\sigma_j}$ are the \emph{Steklov eigenfunctions}. This paper
studies the asymptotic character of the nodal set of the
eigenfunctions of the Steklov eigenvalue problem in the case $M$
equals a bounded open set $\Omega\in \re^2$. In particular the results
in this paper show that the nodal set of the eigenfunction
$\phi_{\sigma_j}$ is \emph{not} dense at scale $\sigma_j^{-1}$ for
some such sets $\Omega$---or, more precisely, that there is a dense
family $\mc{A}$ of simply-connected two-dimensional domains with
analytic boundaries such that, for each $\Omega\in \mc{A}$, the
eigenfunction $\phi_{\sigma_j}$ in the domain $\Omega$ remains nonzero
on a $j$-dependent ball of {$j$-independent} radius. This result
addresses a question put forth under ``Open Problem 10''
in~\cite{GiPo17}.

The behavior of both the Steklov eigenvalues (see
e.g.~\cite{GiPo17,GiPaPoSh,LePaPoSh}) and eigenfunctions (see
e.g.~\cite{PoShTo,GalkToth,BeLi,Zh16,Ze15,SoWaZh,Sha,HiLu}) have been
a topic of recent interest. When $M$ has smooth boundary, the Steklov
eigenfunctions $\phi_{\sigma_j}|_{\partial M}$ behave much like high
energy Laplace eigenfunctions with eigenvalue $\sigma_j^2$. In
particular, they oscillate at frequency
$\sigma_j$. References~\cite{PoShTo,BeLi,Zh16,Ze15,SoWaZh,WaZh,GeRF,Zh15}
study the nodal sets of $\phi_{\sigma_j}|_M$, giving both upper and
lower bounds on its Hausdorff measure similar to those for Laplace
eigenfunctions. In fact, most results regarding Steklov eigenfunctions
in the interior of $M$ extract behavior similar to that of high energy
Laplace eigenfunctions.

The purpose of this article is to show that, away from the boundary of
$M$, Steklov eigenfunctions behave \emph{very} differently than high
energy Laplace eigenfunctions. Not only do they decay rapidly
(see~\cite{GalkToth,HiLu}) but, at least for a dense class of
analytic domains, they oscillate slowly over certain portions of the
domain. Girouard--Polterovich~\cite[Open Problem 10(i)]{GiPo17} raise
the question of whether nodal sets of Steklov eigenfunctions are dense
at scale $\sigma_j^{-1}$ in $M$. One consequence of the results in the
present paper is a negative answer to this question. We show that
arbitrarily close to any simply-connected domain with analytic
boundary $\Omega_0\subset \re^2$, there is a domain $\Omega_1$ for
which the nodal sets are \emph{not} $\sigma_j^{-1}$ dense and, indeed,
that there is a region within $\Omega_1$ where the nodal set density
does not increase as $\sigma_j\to \infty$.  Moreover, the Steklov
eigenfunctions oscillate no faster than a fixed frequency in this
region. These results are summarized in the following theorem.
\begin{theorem}
\label{t:nodal}
Let $\Omega_0\subset \re^2$ be a bounded simply-connected domain with
analytic boundary, and let $k>0$ and $\e>0$ be given. Then there exist
a set $\Omega_1\subset \re^2$ with analytic boundary given by
\begin{equation}\label{e:pert_dom}
\partial\Omega_1=\{ x+{\nu} g(x)\mid x\in \partial\Omega_0\},\qquad\qquad \|g\|_{C^k(\partial\Omega_0)}<\e
\end{equation}
(where ${\nu}$ denotes the outward unit normal to
$\partial\Omega_0$ and where $g$ is an analytic function defined on
$\partial\Omega_0$), a point $x_0\in\Omega_1$ and numbers $0<r_1<r_0$,
 ($B(x_0, r_0)\subset \Omega_1$) such that: for each Steklov
eigenvalue $\sigma$ for the domain $\Omega_1$ there exists a point
$x_\sigma\in B(x_0,r_0)$ such that $B(x_\sigma, r_1)\subset B(x_0, r_0)$
and each Steklov eigenfunction $\phi_\sigma$ of eigenvalue $\sigma$
for the domain $\Omega_1$ satisfies
$$
|\phi_{\sigma}|>0\text{ on }B(x_\sigma, r_1)\subset \Omega_1.
$$
Additionally, ``$\phi_\sigma$ has bounded frequency on $B(x_0,r_0)$''
(a precise statement follows in Theorem~\ref{thm:main}).
\end{theorem}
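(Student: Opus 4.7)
The plan is to reduce the Steklov problem on $\Omega_1$ to a weighted boundary eigenvalue problem on the unit disk via the Riemann map $\Psi:\mathbb{D}\to\Omega_1$. Setting $w(\theta)=|\Psi'(e^{i\theta})|$, the pullback $U_\sigma:=\phi_\sigma\circ\Psi$ is harmonic on $\mathbb{D}$ with boundary trace $a_\sigma:=U_\sigma|_{\partial\mathbb{D}}$ satisfying
$$
H a_\sigma=\sigma\,w\,a_\sigma \qquad \text{on } \partial\mathbb{D},
$$
where $H$ is the disk Dirichlet-to-Neumann operator (the Fourier multiplier $|n|$). Because the map sending a simply-connected analytic domain (modulo conformal invariances) to $w=|\Psi'|$ is locally surjective onto positive analytic functions on $\partial\mathbb{D}$, any prescribed analytic $w$ near $w_0=|\Psi_0'|$ is realized by some analytic $g$ with $\|g\|_{C^k(\partial\Omega_0)}<\e$. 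This gives us full freedom to design the conformal weight.

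Next, I would analyze the weighted eigenvalue problem semiclassically with $h=1/\sigma$. The operator $hH-w$ has semiclassical principal symbol $|\xi|-w(\theta)$, so eigenfunctions microlocalize to the pair of smooth noncrossing curves $\{\xi=\pm w(\theta)\}\subset T^*(\partial\mathbb{D})$, and analytic WKB on the circle yields
$$
a_\sigma(\theta)=w(\theta)^{-1/2}\bigl(c_+e^{i\sigma\Phi(\theta)}+c_-e^{-i\sigma\Phi(\theta)}\bigr)+O(\sigma^{-\infty}),\qquad \Phi(\theta)=\int_0^\theta w(\theta')\,d\theta',
$$
with $\sigma$ determined by a Bohr--Sommerfeld quantization condition, and with $c_-=\overline{c_+}$ imposed by reality of $a_\sigma$. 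Decomposing the harmonic extension into holomorphic and antiholomorphic pieces, one writes $U_\sigma=F_\sigma+\overline{G_\sigma}$, where $F_\sigma(z)=\sum_{n>0}\hat a_\sigma(n)z^n$ comes from the $c_+$ branch and $\overline{G_\sigma(z)}$ from the $c_-$ branch. Complex saddle-point analysis of $\hat a_\sigma(n)$ (whose leading contribution comes from the stationary point $w(\theta_\ast)=|n|/\sigma$) then gives interior asymptotics $F_\sigma(z)\sim \mathcal{A}(z)\,e^{\sigma\Phi_+(z)}$ and $G_\sigma(z)\sim\mathcal{B}(z)\,e^{\sigma\Phi_-(z)}$, with analytic complex phases $\Phi_\pm$ determined by $w$.

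The key geometric observation is that the nodal set of $U_\sigma$ is contained in $\{|F_\sigma|=|G_\sigma|\}$: a strict inequality $|F_\sigma(z)|\neq|G_\sigma(z)|$ on a ball $B\subset\mathbb{D}$ implies $U_\sigma$ does not vanish on $B$. Since $|c_+|=|c_-|$, this dominance reduces to $\mathrm{Re}(\Phi_+(z)-\Phi_-(z))$ having a definite sign and a uniform positive gap on the target region $\Psi^{-1}(B(x_0,r_0))$. I would then choose the analytic perturbation $g$ (and hence the weight $w$, exploiting the freedom established in the first step) so that this gap is present on a prescribed interior ball, pull the resulting non-vanishing ball back through $\Psi$ to a ball of fixed radius $r_1>0$ inside a fixed $B(x_0,r_0)\subset\Omega_1$, and separately handle the finitely many low-frequency eigenfunctions (where WKB fails) by an auxiliary perturbation that moves their nodal sets off $B(x_0,r_0)$. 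The main obstacle is the uniformity of the dominance argument across eigenspaces: for eigenvalues $\sigma$ with multiplicity greater than one, every $(c_+,c_-)$-combination in the eigenspace must inherit the strict inequality $|F_\sigma|>|G_\sigma|$ on the chosen ball; this requires both a genericity/simplicity argument for the designed $w$ and verification that the saddle-point asymptotics have subleading corrections small enough to preserve the leading sign, coordinated simultaneously with the Bohr--Sommerfeld quantization so as to work for all (sufficiently large) $\sigma$.
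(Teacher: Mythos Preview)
Your approach has a fundamental gap at the ``key geometric observation.'' Steklov eigenfunctions are real-valued, so in the holomorphic/antiholomorphic decomposition $U_\sigma=\hat a_\sigma(0)+F_\sigma+\overline{G_\sigma}$ with $F_\sigma(z)=\sum_{n>0}\hat a_\sigma(n)z^n$, reality forces $\hat a_\sigma(-n)=\overline{\hat a_\sigma(n)}$ and hence $G_\sigma\equiv F_\sigma$ on $\mathbb{D}$. Thus $|F_\sigma|\equiv|G_\sigma|$ everywhere, and the dominance criterion $|F_\sigma|\neq|G_\sigma|$ is never satisfied; no choice of $w$ can produce the ``uniform positive gap'' you want. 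You yourself note $c_-=\overline{c_+}$; tracing this through the saddle-point computation gives $\Phi_-=\overline{\Phi_+}$ and $\mathcal B=\overline{\mathcal A}$, so $\mathrm{Re}(\Phi_+-\Phi_-)\equiv 0$. The nodal set of a real harmonic function is the zero set of $\hat a_\sigma(0)+2\,\mathrm{Re}\,F_\sigma$, which is not controlled by $|F_\sigma|$ alone.

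There is a second, equally serious gap: on any fixed compact $K\Subset\mathbb{D}$ the values of $U_\sigma$ are exponentially small in $\sigma$, so a boundary WKB expansion with $O(\sigma^{-\infty})$ remainder says nothing about the sign of $U_\sigma$ there. What is actually needed is control of the \emph{low-frequency} Fourier coefficients $\hat a_\sigma(k)$ for bounded $|k|$, since these are the modes that survive multiplication by $r^{|k|}$ when $r$ is small; these coefficients are themselves exponentially small and invisible to standard WKB. The paper's argument targets exactly this: it perturbs $\Omega_0$ so that the conformal weight $w=|\Psi'|$ is a nonconstant trigonometric polynomial of degree $m_0$, which turns the boundary eigenvalue equation into a finite-width recursion $|n|\,\hat a_\sigma(n)=\sigma\sum_{|m|\le m_0}a_m\,\hat a_\sigma(n-m)$. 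Solving this recursion outward yields $|\hat a_\sigma(k)|\le C_0^{|k|}\bigl(\sum_{|\ell|\le m_0}|\hat a_\sigma(\ell)|^2\bigr)^{1/2}$ for $|k|\lesssim\sigma$, and combined with the known exponential decay of the high modes this gives a lower bound $\sum_{|\ell|\le m_0}|\hat a_\sigma(\ell)|^2\ge e^{-C\sigma}\|\hat a_\sigma\|_{\ell^2}^2$. On a small ball $B(0,\delta)$ the factors $\delta^{|k|}$ then make the first $m$ modes dominate the full harmonic extension uniformly in $\sigma$, so $U_\sigma$ is, up to a controlled relative error, a harmonic polynomial of fixed degree---and any such polynomial is nonzero on a ball of fixed radius. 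Your semiclassical/saddle-point machinery never accesses this low-frequency information.
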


\begin{figure}[!h]
  \centering
  \includegraphics[width=.45\textwidth]{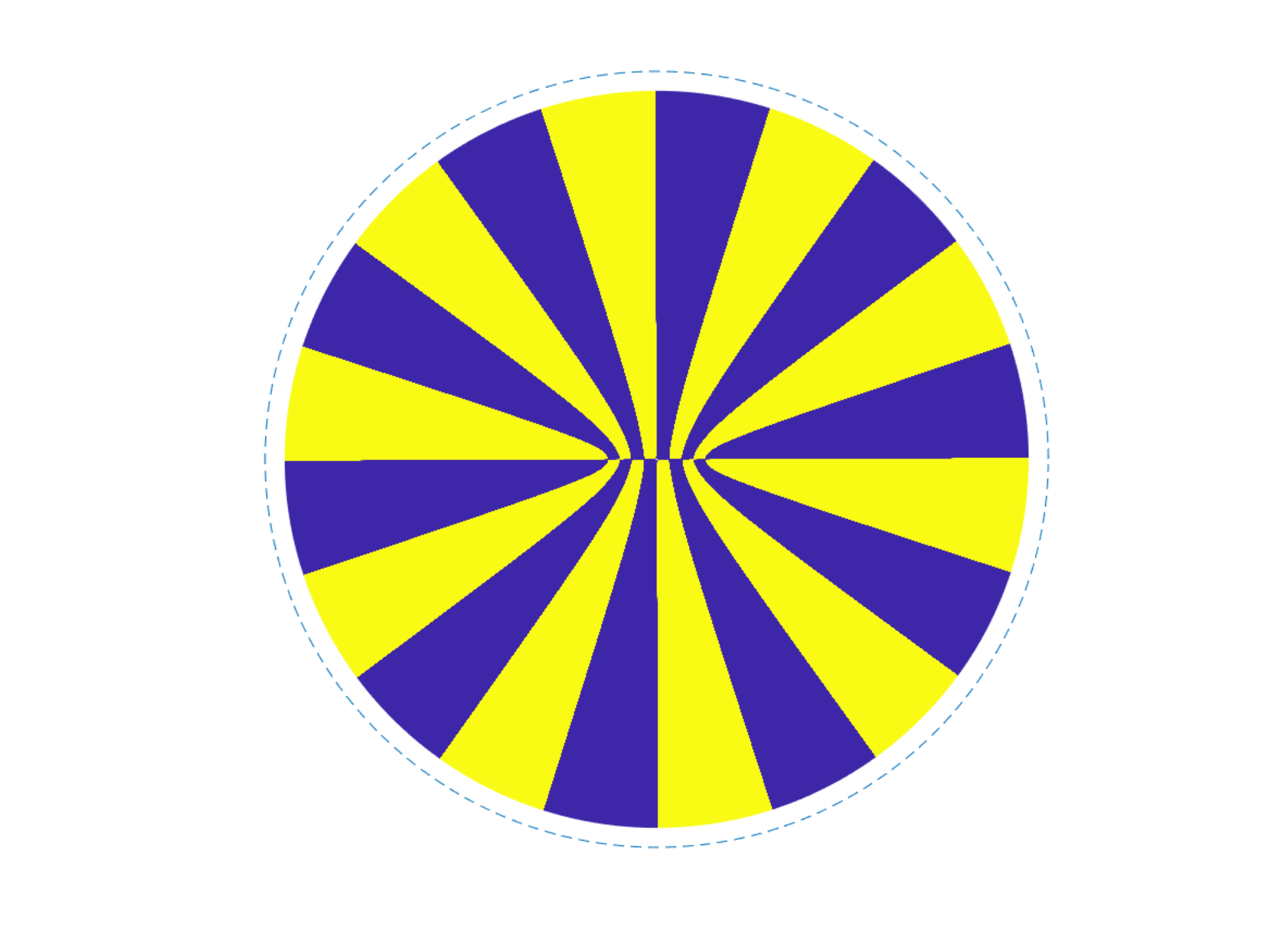}
   \includegraphics[width=.45\textwidth]{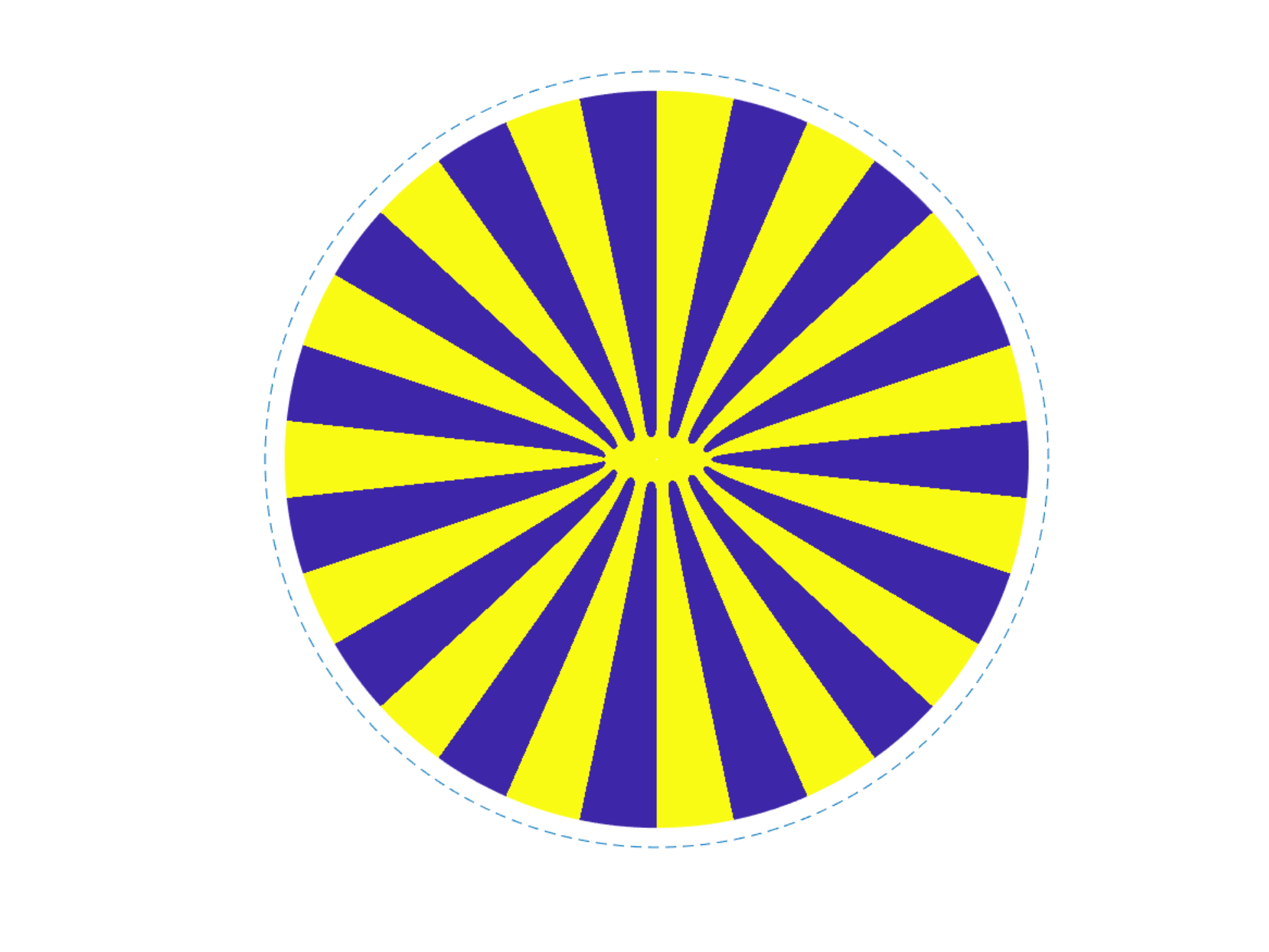}
   \caption{Fixed-sign sets for Steklov eigenfunctions over the
     elliptical domain $\Omega = x^2+\frac{y^2}{1.01^2}=1$. The yellow
     and blue regions indicate the subsets over which the
     eigenfunctions are positive and negative, respectively. The left
     and right images correspond to the eigenvalues
     $\sigma_{20}=9.9502$ and $\sigma_{30}=14.9253$, respectively. For
     a circle the nodal lines coincide with a set of $j$ uniformly
     arranged radial lines from the center to the boundary: they are
     dense at scale $\sigma_j^{-1}=j^{-1}$ over the complete domain,
     including the origin. Under the barely-visible perturbation of
     the unit disc into the slightly elliptical domain $\Omega$,
     regions of asymptotically fixed size on which the eigenfunction
     does not change sign open-up within $\Omega$. Indeed, the nodal
     set corresponding to $\sigma_{30}$ (right image) shows such an
     opening, whereas the nodal set corresponding to $\sigma_{20}$
     (left image) does not; cf. also
     Remark~\ref{disclaimer}. \label{f:EllipseEigenvalue}}
\end{figure}

Theorem~\ref{t:nodal} is a consequence of {the more} precise results
presented in Theorems~\ref{thm:main} and~\ref{t:tunnel} and
Corollary~\ref{c:dense}. In particular, these results establish that,
for each domain $\Omega$ in a dense class $\mathcal A$ of
two-dimensional domains, an estimate holds for the truncation error in
certain ``mapped Fourier expansions'' of the eigenfunctions
$\phi_\sigma$ (i.e., Fourier expansions of $\phi_\sigma$ under a
change of variables). {This estimate} is uniformly valid {over a
  subdomain of $\Omega$} for all eigenfunctions $\phi_\sigma$ {with
  $\sigma$ large enough}. To state these results we first introduce
certain conventions and notations, and we review known facts and
results from complex analysis.

In what follows, and throughout the reminder of this article, $\re^2$
is identified with the complex plane $\mathbb{C}$,
$\Omega\subset \mathbb{C}$ denotes a bounded, simply-connected open
set with analytic boundary, and $D:=\{z\in \mathbb{C}\mid |z|<1\}$
denotes the open unit disc in the complex plane. Under these
assumptions it follows from the Riemann mapping theorem~\cite{BeKr} that there is a smooth map $f:\overline{D}\to \mathbb{C}$ such that $f|_{D}:D\to \Omega$ is a biholomorphism and $|\partial_zf|>0$ on $\overline{D}$---that is to say,
$f|_D:D\to \Omega$ is a biholomorphic conformal mapping of $\Omega$ up
to and including $\partial\Omega$. {We call such a function $f$ a \emph{mapping function for $\Omega$}.} Note that,
denoting by $\partial_r$ and $\partial_\nu$ the radial derivative on
the boundary of $D$ and the normal derivative on the boundary of
$\Omega$, respectively, we have
$\partial_r=|\partial_z f|\partial_\nu$ and $|\partial_zf|>0$. Thus,
for $z\in \partial D$ the function
\begin{equation}\label{e:eigfn_circle}
u_{\sigma_j}:=\phi_{\sigma_j}\circ f
\end{equation}
satisfies,
$$
\partial_r u_{\sigma_j}(z)=|\partial_zf(z)|\partial_\nu \phi_{\sigma_j}(f(z))=|\partial_zf(z)|\sigma_j \phi_{\sigma_j}(f(z)),
$$
and, hence, the generalized Steklov eigenvalue problem
\begin{equation}
\label{e:steklovConformal}
\begin{cases}
-\Delta u_{\sigma_j}=0&\text{in }D\\
\partial_r u_{\sigma_j}=\sigma_j |\partial_z f|u_{\sigma_j}&\text{on }\partial D.
\end{cases}
\end{equation}

Finally we {introduce notation for the relevant} Fourier analysis. For $v\in C(\overline{D})$ we let
\begin{equation}\label{e:bfc}
\hat{v}(k)=\frac{1}{2\pi}\int_0^{2\pi} v(\cos \theta,\sin\theta)e^{-ik\theta}d\theta
\end{equation}
denote the ``boundary Fourier coefficients'', namely, the Fourier
coefficients of the restriction $v|_{_{\partial D}}$ of $v$ to
$\partial D$. Where notationally useful, we write
$\mathcal{F}[v] = \hat{v}$.

\begin{definition}
\label{d:tunnel} We say that the Steklov problem on $\Omega$ satisfies the tunneling condition if there is $m_0>0$ and a mapping function $f$ for $\Omega$, such that for all $K>0$ there is $C_0>0$ satisfying for any $m$
$$
|\hat{u}_{\sigma}(k)|\leq C_0^{|k-m|}\Big(\sum_{\ell=m-m_0}^{m+m_0}|\hat{u}_{\sigma}(\ell)|^2\Big)^{\frac{1}{2}},\qquad |k|\leq K\sigma.
$$ 
\end{definition}

Lemma~\ref{l:lowerBound} shows that any tunneling Steklov problem
there exist $\sigma_0>0$ so that for each $m\in\mathbf{Z}$ there is a
constant $C>0$ such that for $\sigma>\sigma_0$,
\begin{equation}
\label{e:tunnelBelow}
e^{-C\sigma}\|\hat{u}\|_{\ell^2}\leq \Big(\sum_{k= m-m_0}^{m_0}|\hat{u}(k)|^2\Big)^{\frac{1}{2}}.
\end{equation}
This estimate and its connections with similar results in quantum
mechanics motivate the ``tunneling'' terminology introduced in
Definition~\ref{d:tunnel}. To explain this, recall that $u$ is an
eigenfunction of the Dirichlet to Neumann map which is a
pseudodifferential operator on $\partial\Omega$ with symbol $|\xi|_g$
where $g$ is the metric on $\partial\Omega$~\cite[Sec. 7.11, Vol
2]{Tayl2}. Therefore, the classical problem corresponding to the
Steklov problem is the Hamiltonian flow for the Hamiltonian $|\xi|_g$
on $T^*\partial\Omega$ at energy $|\xi|_g=\sigma$---which describes
the motion of a free particle on $\partial\Omega$. The allowable
energies for this classical problem are given by $\{|\xi|_g=\sigma\}$
which, in the Fourier series representation correspond to
$\sigma = |\xi|_g\sim |k|$.  Thus, the classically forbidden region is
$\big|\sigma^{-1}|k|-1\big|>c>0$. Equation~~\eqref{e:tunnelBelow}
tells us that, in cases for which the Steklov problem on $\Omega$ is
tunneling, Steklov eigenfunctions carry positive energy even in the
classically forbidden region $\sigma^{-1}|k|\ll 1$, with an energy
value that is no smaller than exponentially decaying in
$\sigma$. (Using the estimates of~\cite{GalkToth} one can also see
that Steklov eigenfunctions carry {\em at most} exponentially small
energy in the forbidden region.)

\begin{theorem}
\label{thm:main}
Assume that the Steklov problem on $\Omega$ is tunneling and let $\sigma$ denote a Steklov eigenvalue for the set $\Omega$.
Let
\begin{equation}\label{e:delta_defs}
  \tilde{u}_{\sigma,\delta}:=u_{\sigma}|_{B(0,\delta)} = \sum_{k=-\infty}^\infty \hat{u}_{\sigma_j}(k) r^{|k|}e^{ik\theta},\qquad \tilde{u}_{\sigma_j,\delta,m}:=\sum_{|k|< m} \hat{u}_{\sigma_j}(k) r^{|k|}e^{ik\theta}.
\end{equation}
Then, there exist a constant $c>0$ such that, for each integer $N>0$,
there are constants $C_N$, $\sigma_0$, $\delta_0$, and $m_0>0$ so that
for all $0<\delta<\delta_0$, $m>m_0$, and $ \sigma_j>\sigma_0$ the
inequality
\begin{equation}
\frac{\|\tilde{u}_{\sigma,\delta} -\tilde{u}_{\sigma,\delta,m}\|_{C^N(B(0,\delta))}}{\|\tilde{u}_{\sigma,\delta}\|_{L^2(B(0,\delta))}}
\leq C_N (\delta^{m-N-m_0-1}+e^{-c\sigma})
\label{e:remainder}
\end{equation}
holds.
\end{theorem}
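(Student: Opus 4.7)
Since $\phi_\sigma$ is harmonic on $\Omega$ and $f:D\to\Omega$ is a biholomorphism, the pullback $u_\sigma=\phi_\sigma\circ f$ is harmonic on $D$ and thus admits the Fourier expansion~\eqref{e:delta_defs} throughout $D$; the remainder in~\eqref{e:remainder} is precisely the tail $R_m:=\sum_{|k|\geq m}\hat u_\sigma(k)\,r^{|k|}e^{ik\theta}$. The plan is to split $R_m$ at the ``Weyl threshold'' $|k|\sim K\sigma$ (with $K$ a large fixed constant), bound each piece in $C^N(B(0,\delta))$, and divide by a matching Parseval-based lower bound for $\|\tilde u_{\sigma,\delta}\|_{L^2(B(0,\delta))}$.

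For the low-frequency piece $m\leq|k|\leq K\sigma$, I apply the tunneling condition of Definition~\ref{d:tunnel} with anchor $m^{\ast}=0$: $|\hat u_\sigma(k)|\leq C_0^{|k|}M_0$, where $M_0:=(\sum_{|\ell|\leq m_0}|\hat u_\sigma(\ell)|^2)^{1/2}$. Since each basis element $r^{|k|}e^{ik\theta}$ equals $z^k$ or $\bar z^{|k|}$, we have $\|r^{|k|}e^{ik\theta}\|_{C^N(B(0,\delta))}\leq C_N|k|^N\delta^{|k|-N}$. Summing the resulting geometric series (valid once $\delta_0$ is chosen so that $C_0\delta_0<1$) yields $\|R_m^{\mathrm{low}}\|_{C^N(B(0,\delta))}\leq C_N\,m^N(C_0\delta)^m\delta^{-N}M_0$. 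For the high-frequency piece $|k|>K\sigma$, a standard exponential-decay estimate for Steklov eigenfunctions in the classically forbidden region (cf.~\cite{GalkToth}) gives $(\sum_{|k|>K\sigma}|\hat u_\sigma(k)|^2)^{1/2}\leq e^{-c_K\sigma}\|\hat u_\sigma\|_{\ell^2}$ with $c_K\to\infty$ as $K\to\infty$; Cauchy--Schwarz then bounds $\|R_m^{\mathrm{high}}\|_{C^N(B(0,\delta))}$ by $C_Ne^{-c_K\sigma}\|\hat u_\sigma\|_{\ell^2}$.

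For the denominator, Parseval in polar coordinates gives
$$
\|\tilde u_{\sigma,\delta}\|_{L^2(B(0,\delta))}^2=\pi\sum_k|\hat u_\sigma(k)|^2\frac{\delta^{2|k|+2}}{|k|+1}\geq c\,\delta^{2m_0+2}M_0^2,
$$
so $\|\tilde u_{\sigma,\delta}\|_{L^2(B(0,\delta))}\geq c\,\delta^{m_0+1}M_0$. The $M_0$ factor cancels against the low-frequency numerator, leaving a ratio $\leq C_N\,m^N C_0^m\delta^{m-N-m_0-1}$; meanwhile the tunneling-derived lower bound~\eqref{e:tunnelBelow} ($M_0\geq e^{-C\sigma}\|\hat u_\sigma\|_{\ell^2}$) converts the high-frequency contribution into $C_N\delta^{-m_0-1}e^{-(c_K-C)\sigma}\leq C_N e^{-c\sigma}$ provided $K$ is chosen large enough that $c_K>C+c$.

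The principal technical obstacle is the final bookkeeping needed to absorb the factor $m^N C_0^m$ from the low-frequency ratio into the stated bound $C_N(\delta^{m-N-m_0-1}+e^{-c\sigma})$. This is accomplished by taking $\delta_0$ small enough that $(C_0\delta)^m$ decays geometrically in $m$, so that for $m\gtrsim\sigma/|\log\delta|$ the low-frequency ratio is dominated by the $e^{-c\sigma}$ term, while for bounded $m$ the factor $m^NC_0^m$ is absorbed into $C_N$; the intermediate range of $m$ is handled by enlarging $m_0$ in the theorem statement relative to the tunneling constant, so that the $\delta$-exponent $m-N-m_0-1$ leaves enough room to accommodate the residual geometric factors.
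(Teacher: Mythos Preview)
Your strategy coincides with the paper's: split the tail at $|k|\sim K\sigma$, apply the tunneling bound with anchor $0$ on the low piece, apply Lemma~\ref{l:lowerBound} (which is precisely~\eqref{e:tunnelBelow}) to trade $\|\hat u\|_{\ell^2}$ for $M_0$ on the high piece, and use the Parseval identity on $B(0,\delta)$ for the denominator.

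The one genuine difference is your treatment of the high-frequency tail. You invoke a coefficient-decay estimate with rate $c_K\to\infty$ and then choose $K$ large to overcome the constant $C$ in~\eqref{e:tunnelBelow}. The paper does something simpler and more self-contained: it fixes $K=2$, bounds $(\sum_{|k|\ge 2\sigma}|\hat u(k)|^2)^{1/2}$ \emph{trivially} by $\|\hat u\|_{\ell^2}$, and extracts the exponential gain instead from the second Cauchy--Schwarz factor $(\sum_{|k|\ge 2\sigma}|k|^{2N}\delta^{2|k|-2N})^{1/2}\le C_N\sigma^N\delta^{2\sigma-N}$. After~\eqref{e:tunnelBelow} this yields $C_N\delta^{2\sigma-N-m_0-1}e^{C\sigma}$, and choosing $\delta_0<e^{-2C}$ makes $\delta^{2\sigma}e^{C\sigma}\le e^{-c\sigma}$. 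This avoids any appeal to $c_K\to\infty$, which is not what the cited result in~\cite{GalkToth} directly provides (the version used in Lemma~\ref{l:lowerBound} gives only a fixed rate). Your own Cauchy--Schwarz step already contains this $\delta^{K\sigma}$ factor---you discarded it when you wrote $C_N e^{-c_K\sigma}\|\hat u\|_{\ell^2}$---so the fix on your side is immediate.

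Your identification of the residual $m^N C_0^m$ factor is sharp; the paper in fact glosses over exactly this point (the passage to ``$\le C_N\delta^{m-N}A$'' is not literally uniform in $m$, since the $|k|=m$ term alone contributes $C_0^m m^N$). Your proposed fix by ``enlarging $m_0$'' is too vague to work as stated, since shifting the exponent by a constant cannot absorb a factor growing like $C_0^m$. A clean repair is to sacrifice half of the $\delta$-exponent: take $\delta_0<C_0^{-2}$, write $C_0^{|k|}\delta^{|k|}=(C_0\delta^{1/2})^{|k|}\delta^{|k|/2}$, and sum to obtain $C_N\delta^{m/2-N-m_0-1}$ in place of $\delta^{m-N-m_0-1}$. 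This weaker exponent is entirely adequate for the application in Theorem~\ref{t:nodal}.
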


Letting $\{\phi_{\sigma_j}\}_{j=1}^\infty$ denote an orthonormal basis
of Steklov eigenfunctions and calling
$u_{\sigma_j}=\phi_{\sigma_j}\circ f$, Theorem~\ref{thm:main} shows in
particular that
\begin{equation}\label{e:finitely}
u_{\sigma_j}=\sum_{|k|<m}\hat{u}_{\sigma_j}(k)r^{|k|}e^{ik\sigma}+O\Big((r^{m-m_0-1}+e^{-c\sigma_j})\sqrt{\sum_{|k|<m} |\hat{u}_{\sigma_j}(k)|^2 \frac{r^{2k+1}}{2k+1}}\Big).
\end{equation}
In other words, for $r$ small, $u_{\sigma_j}$ is well approximated by
a function with finitely many Fourier modes. If there is $c>0$ such
that
$$|\hat{u}_{\sigma_j}(0)|\geq c\sqrt{\sum_{0<|k|<m}|\hat{u}_{\sigma_j}(k)|^2},$$
then we obtain 
$$
u_{\sigma_j}=\hat{u}_{\sigma_j}(0) +O((r+e^{-c\sigma_j})|\hat{u}_{\sigma_j}(0)|)
$$
and $u_{\sigma_j}$ is nearly constant on small balls centered around
0. In general, however, finitely many Fourier modes are necessary to
capture the lowest-order asymptotics, as indicated in
equation~\eqref{e:finitely}.

One of the main components of the proof of Theorem~\ref{t:nodal}, in
addition to Theorem~\ref{thm:main}, is the construction of a large
class of domains $\Omega$ for which the Steklov problem is
tunneling. To this end, we introduce some additional definitions. A
function $v\in C(D)$ will be said to be \emph{boundary-band-limited}
provided $\hat{v}(k) = 0$ except for a finite number of values of
$k\in\mathbb{Z}$. We say that a mapping function $f$ is \emph{boundary
  band limited conformal} (BBLC) if $|\partial_z f|$ is boundary
band-limited. If in addition, $|\partial_z f||_{\partial D}$ is
non-constant, we will write that $\Omega$ is BBLCN. Finally, we say
the domain $\Omega$ is BBLC (BBLCN) if and only if a {BBLC (BBLCN)
  mapping function, $f:D\to \Omega$ exists}.  We now present the main
theorem of this paper.
\begin{theorem}
\label{t:tunnel}
Assume $\Omega$ is BBLCN. Then the Steklov problem on $\Omega$ is
tunneling. 
\end{theorem}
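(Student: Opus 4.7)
The plan is to translate the Steklov boundary condition on $\partial D$ into a finite-term linear recurrence on the boundary Fourier coefficients $\{\hat u_\sigma(k)\}_{k\in\Z}$ of $u_\sigma:=\phi_\sigma\circ f$, and then to bound this recurrence via a companion-type transfer matrix whose operator norm is controlled uniformly for indices in the range $|n|\le K\sigma$. Harmonicity of $u_\sigma$ in $D$ gives $u_\sigma(r,\theta)=\sum_k\hat u_\sigma(k)\,r^{|k|}e^{ik\theta}$, and in particular $\partial_r u_\sigma|_{\partial D}(\theta)=\sum_k|k|\hat u_\sigma(k)e^{ik\theta}$. Writing $\rho:=|\partial_z f||_{\partial D}$, the boundary condition in~\eqref{e:steklovConformal} becomes $\sum_k|k|\hat u_\sigma(k)e^{ik\theta}=\sigma\,\rho(\theta)\sum_k\hat u_\sigma(k)e^{ik\theta}$, and the BBLCN hypothesis provides a maximal bandwidth $N_0\ge 1$ for $\rho$ with both $\hat\rho(N_0)$ and $\hat\rho(-N_0)=\overline{\hat\rho(N_0)}$ nonzero. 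Taking the $n$-th Fourier coefficient yields the $(2N_0+1)$-term recurrence
$$|n|\,\hat u_\sigma(n)\;=\;\sigma\sum_{|j|\le N_0}\hat\rho(j)\,\hat u_\sigma(n-j),\qquad n\in\Z.$$

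Isolating the two extremal terms $j=\mp N_0$, which is permissible because $\hat\rho(\pm N_0)\ne 0$, yields explicit one-step forward and backward recursions, for instance
$$\hat u_\sigma(n+N_0)\;=\;\frac{|n|-\sigma\hat\rho(0)}{\sigma\hat\rho(-N_0)}\,\hat u_\sigma(n)-\sum_{\substack{0<|j|\le N_0\\ j\ne -N_0}}\frac{\hat\rho(j)}{\hat\rho(-N_0)}\,\hat u_\sigma(n-j),$$
and a mirror relation for $\hat u_\sigma(n-N_0)$. On the length-$2N_0$ vector $v_n:=(\hat u_\sigma(n-N_0+1),\dots,\hat u_\sigma(n+N_0))^T$ these become $v_{n+1}=T_n^+(\sigma)\,v_n$ and $v_{n-1}=T_n^-(\sigma)\,v_n$ for companion-type matrices $T_n^\pm(\sigma)$; their only non-trivial row has entries bounded by $(|n|/\sigma+|\hat\rho(0)|)/|\hat\rho(\mp N_0)|$ on the main slot and by $|\hat\rho(j)|/|\hat\rho(\mp N_0)|$ on the remaining slots. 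For $|n|\le K\sigma$ this yields the uniform bound $\|T_n^\pm(\sigma)\|\le C_0$ with a constant $C_0=C_0(K,\rho)$ independent of $\sigma$ and of $n$ in that range.

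Choose $m_0\ge N_0$. For any $m\in\Z$, the entries of $v_m$ lie inside the window $[m-m_0,m+m_0]$, so $\|v_m\|_2\le(\sum_{\ell=m-m_0}^{m+m_0}|\hat u_\sigma(\ell)|^2)^{1/2}$. For $k$ with $|k|\le K\sigma$, iterating $T^+$ forward from $m$ to $k$ when $k>m$, or $T^-$ backward when $k<m$, along a path whose intermediate indices remain within the bounded-norm regime, gives $\|v_k\|_2\le C_0^{|k-m|}\|v_m\|_2$. Since $|\hat u_\sigma(k)|\le\|v_k\|_2$, a final cosmetic enlargement of $C_0$ to absorb the $O(1)$ offset between $k$ and the coordinates of $v_k$ produces the tunneling inequality of Definition~\ref{d:tunnel}.

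The main obstacle is securing a two-sided non-degenerate recurrence: if $\hat\rho(N_0)$ or $\hat\rho(-N_0)$ were to vanish, only one of the transfer matrices $T_n^\pm$ would exist and the windowed exponential bound would collapse. The BBLCN hypothesis is tailored precisely for this: its non-constancy clause forces $N_0\ge 1$, and reality of $\rho$ then makes both extremal Fourier coefficients nonzero, so forward and backward propagation are simultaneously available throughout $|n|\le K\sigma$.
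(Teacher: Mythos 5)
Your argument is essentially the paper's: the same finite-term recurrence $|n|\hat u(n)=\sigma\sum_m a_m\hat u(n-m)$ obtained from the band-limited symbol, the same isolation of the extremal coefficients $a_{\pm m_0}$ (nonzero by non-constancy plus realness of $|\partial_z f|$), the same uniform bound on the coefficient $(|n|-\sigma a_0)/\sigma$ for $|n|\le K\sigma$, and the same window-by-window exponential propagation. Your transfer-matrix packaging is just a reformulation of the paper's direct induction (Lemma~\ref{l:induct}), so the proposal is correct and takes essentially the same route.
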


\begin{rem}\label{disclaimer}
  It is not clear whether the elliptical and kite-shaped domains
  (equations~\eqref{ellipt_domain} and~\eqref{kite_domain}) considered
  in Figures~\ref{f:EllipseEigenvalue},~\ref{ellipses_31-81}
  and~\ref{kites_20-60} satisfy the BBLCN condition or, more
  generally, whether they have tunneling Steklov problems (we have not
  as yet been able to establish that the tunneling condition holds for
  domains that are not BBLCN). However, domain-opening observations
  such as those displayed in Figure~\ref{f:EllipseEigenvalue} and
  Section~\ref{s:numer_res}, suggest that these domains may
  nevertheless be tunneling. This and other domain-opening
  observations provide support for Conjecture~\ref{conj}
  below. (Steklov eigenfunctions on a domain which satisfies the BBLCN
  condition, and, therefore, in view of Theorem~\ref{t:tunnel}, is
  known to be tunneling, are displayed in Figure~\ref{f:approx}.)
  \end{rem}
  In view of Remark~\ref{disclaimer} we conjecture that every Steklov
  problem on an analytic domain is tunneling unless the Steklov domain
  $\Omega$ is a disc:
\begin{conjecture}
\label{conj}
Let $\Omega\subset \re^2$ be a bounded, simply-connected domain with real analytic boundary that is not equal to $B(x,r)$ for any $x\in \re^2$, $r>0$. Then the Steklov problem on $\Omega$ is tunneling.
\end{conjecture}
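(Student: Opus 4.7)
My plan is to attempt a reduction from the general analytic case to the BBLCN case treated in Theorem~\ref{t:tunnel} via a truncation-and-perturbation argument in Fourier space. Fix a mapping function $f:\overline{D}\to\overline{\Omega}$; since $\partial\Omega$ is real-analytic, $f$ extends analytically to a neighborhood of $\overline{D}$, and therefore $F(\theta):=|\partial_z f(e^{i\theta})|$ extends holomorphically to a strip $|\Im\theta|<a$ with $a>0$. Its Fourier coefficients $m_j:=\widehat{F}(j)$ then satisfy $|m_j|\le C e^{-\alpha |j|}$ for some $\alpha>0$. Expanding~\eqref{e:steklovConformal} in Fourier modes on $\partial D$ yields the infinite-band recursion
\[
  |k|\,\hat u_\sigma(k)\;=\;\sigma\sum_{\ell\in\Z}m_{k-\ell}\,\hat u_\sigma(\ell),
\]
and the hypothesis that $\Omega$ is not a disc is precisely the statement that $F$ is non-constant, i.e.\ at least one $m_j$ with $j\neq 0$ is nonzero (if $|\partial_z f|$ were constant, $f$ would be affine and $\Omega$ a disc).

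The first step is to truncate: set $m_j^{(T)}:=m_j\,\mathbf 1_{|j|\le T}$, which for $T$ larger than the smallest $|j|$ with $m_j\neq 0$ yields a non-constant boundary-band-limited symbol, and consider the banded recursion
\[
  |k|\,a_k\;=\;\sigma\sum_{|j|\le T}m_j\,a_{k-j}.
\]
The proof of Theorem~\ref{t:tunnel} supplies a transfer-matrix propagation estimate for solutions of such a BBLCN recursion. I would extract that estimate in inhomogeneous form, so that if $|k|a_k-\sigma(m^{(T)}*a)(k)=F_k$ then
\[
  |a_k|\;\le\; C_0^{|k-m|}\Big(\sum_{|\ell-m|\le m_0}|a_\ell|^2\Big)^{1/2}+\sum_\ell G_{k\ell}|F_\ell|,
\]
where $G_{k\ell}$ is an explicit Duhamel-type kernel built from the transfer matrices.

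Apply this to $a_k=\hat u_\sigma(k)$: the true Steklov recursion then has the above inhomogeneous form with tail forcing
\[
  F_k=\sigma\sum_{|j|>T}m_j\,\hat u_\sigma(k-j),\qquad |F_k|\le C\sigma\, e^{-\alpha T}\|\hat u_\sigma\|_{\ell^2}.
\]
Choosing $T=T(|k-m|)$ growing linearly with $|k-m|$ should render the tail contribution small compared with the leading transfer-matrix bound and deliver the tunneling conclusion with possibly enlarged constants $m_0$ and $C_0$, uniformly for $|k|\le K\sigma$.

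The principal obstacle lies in the dependence of the transfer-matrix constant $C_0$ on $T$. The banded recursion is solved for $a_{k+T}$ by division by $\sigma m_{-T}$, which is itself exponentially small in $T$ by the analyticity decay; hence $\log C_0(T)$ may grow at a rate comparable to $\alpha T$, and the balance between propagation growth and tail smallness becomes delicate. Circumventing this likely requires abandoning truncation and exploiting analyticity globally, for instance by deforming the contour $\theta\in[0,2\pi)$ into the strip of holomorphy of $F$ so as to reinterpret the Fourier-tail decay as a genuine Agmon-type exponential weight on $\hat u_\sigma$ in the classically forbidden region $|k|\ll\sigma$. A weighted Carleman estimate of this form, analogous to the Agmon lower bound for semiclassical Schr\"odinger eigenfunctions and consistent with the semiclassical interpretation of the DtN symbol $|\xi|_g$ recalled after Definition~\ref{d:tunnel}, would avoid truncation entirely and conceivably yield the full conjecture.
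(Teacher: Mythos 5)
The statement you are addressing is Conjecture~\ref{conj}, which the paper explicitly leaves \emph{open}: Remark~\ref{disclaimer} records that the authors have not been able to establish the tunneling condition for any domain that is not BBLCN, and the conjecture is supported only by numerical domain-opening observations. So there is no proof in the paper to compare against; the only question is whether your argument closes the gap, and it does not---for essentially the reason you flag yourself, which is worth making sharper. The propagation estimate underlying Theorem~\ref{t:tunnel} (Lemma~\ref{l:induct}) is obtained by solving the banded recursion for the extreme index $\hat u(n\pm m_0)$, which requires dividing by the top band coefficient $a_{\mp m_0}$; the resulting constant satisfies $C_0\gtrsim |a_{m_0}|^{-1}$, a \emph{fixed} finite quantity in the BBLCN case. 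In your truncated recursion the analogous divisor is $m_{\mp T}$, for which analyticity supplies only the upper bound $|m_{\pm T}|\le Ce^{-\alpha T}$ and no lower bound whatsoever: $m_{\pm T}$ may be far smaller than $e^{-\alpha T}$, or may vanish identically for infinitely many $T$ (for instance if $F$ has only even nonzero modes, or is a trigonometric polynomial plus a sparse tail), in which case the truncated recursion cannot be solved for the extreme index at all. Even in the most favorable case $|m_{\pm T}|\asymp e^{-\alpha T}$, the propagated bound is $C_0(T)^{|k-m|}\gtrsim e^{\alpha T|k-m|}$, so taking $T$ to grow linearly in $|k-m|$ produces a loss of order $e^{c|k-m|^2}$ against a tail gain of only $e^{-\alpha T}$; the tunneling definition requires a bound $C_0^{|k-m|}$ with $C_0$ independent of $k$, so the balance fails decisively rather than delicately. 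In addition, the inhomogeneous (Duhamel) form of Lemma~\ref{l:induct} that you invoke is not established in the paper and would itself need a proof, since the induction there uses the exact homogeneous relation at every step.

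Your fallback---abandoning truncation in favor of a global Agmon/Carleman-type weight exploiting the holomorphy of $F$ in a strip---is a sensible research direction and consistent with the semiclassical heuristics the paper records after Definition~\ref{d:tunnel}, but as written it is a program rather than an argument: the tunneling condition is a \emph{lower} bound on a fixed window of $2m_0+1$ low modes relative to all modes $|k|\le K\sigma$, whereas Agmon-type estimates in the classically forbidden region generically deliver \emph{upper} bounds (as the paper notes, the results of~\cite{GalkToth} already give the matching upper bound); converting exponential upper bounds into the required nondegeneracy of the low modes is precisely the open difficulty. The honest reading of your submission is that it is a candidly flagged, but failed, reduction to Theorem~\ref{t:tunnel}, and the statement remains a conjecture.
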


\subsection*{Outline of the paper}
This paper is organized as follows. Section~\ref{s:approx} shows that
arbitrary analytic, bounded, simply-connected domains can be
approximated arbitrarily closely by BBLCN domains. Then,
Sections~\ref{s:tunnel} and~\ref{s:fourier} provide proofs for
Theorems~\ref{t:tunnel} and~\ref{thm:main}, respectively. The
numerical methods used in this paper to produce accurate Steklov
eigenvalues, eigenfunctions, and associated nodal sets are presented
in Section~\ref{s:numerical}. Section~\ref{s:numer_res}, finally,
illustrates the methods with numerical results for elliptical and
kite-shaped domains.

\begin{rem}
  Throughout this article we abuse notation slightly by allowing $C$
  to denote a positive constant that may change from line to line but
  does not depend on any of the parameters in the problem. In addition
  $C_N$ is a positive constant that may change from line to line and
  depends only on the parameter $N$.
\end{rem}

\section{Approximation by tunneling domains}
\label{s:approx}

This section shows that any analytic domain can be approximated
arbitrarily closely (in a sense made precise in
Corollary~\ref{c:dense}) by a BBLCN domain. To do this, first let
$M\geq0$, $\alpha_i\in \mathbb{C}\setminus \overline{D}$ for
$i=1,\dots,N$, and let $N_i\geq 1$, $i=1,\dots M$, and let us seek
approximating BBLCN domains whose mappings
$f:\overline{D}\to \mathbb{C}$ take the form
$$
f(z)=\int_0^z p^2(w)dw,\qquad p(z)= \prod_{i=1}^M (z-\alpha_i)^{N_i}.
$$
In words: $f$ is the integral of the square of a polynomial with roots
outside $\overline{D}$.  It follows that
$$
\partial_z f=\prod_{i=1}^M(z-\alpha_i)^{2N_i},\qquad
|\partial_zf |=\prod_{i=1}^M (|z-\alpha_i|^{2})^{N}.
$$
In particular, 
$$
|\partial_z f|(e^{i\theta})=\prod_{i=1}^M(1-e^{i\theta}\overline{\alpha_i}-e^{-i\theta}\alpha_i+|\alpha_i|^2)^{N_i}
$$
which manifestly shows that ${|\partial_z f|}$ is boundary-band-limited.

We next show that an arbitrary non-vanishing analytic function on
$\overline{D}$ can be approximated by the square of a polynomial.

\begin{lemma}
\label{l:bandLimitedApprox}
Let $g:\overline{D}\to \mathbb{C}$ smooth with $g|_{D}$ analytic and $|g|>0$ on $\overline{D}$. Then, for any $\e_0>0$ and $k>0$, there are $M>0$, $\alpha_0$, $\{(\alpha_i,N_i)\}_{i=1}^M$ with $|\alpha_i|>1$, $i=1,\dots ,M$ such that 
$$
\|g- \alpha_0\prod_{i=1}^M (z-\alpha_i)^{2N_i}\|_{C^k(\overline{D})}<\e_0.
$$
\end{lemma}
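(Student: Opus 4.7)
The plan is to reduce the problem to approximating a holomorphic \emph{square root} of $g$ by a polynomial. Since $g$ is non-vanishing on the simply-connected set $\overline{D}$, a continuous logarithm of $g$ exists and hence $g$ admits an analytic square root $h$ with $h^2 = g$. If $q$ is a polynomial approximating $h$, then $q^2$ approximates $g$; writing $q(z) = c \prod_{i=1}^M (z - \alpha_i)^{N_i}$ one gets $q^2 = c^2 \prod_{i=1}^M (z - \alpha_i)^{2N_i}$, so the fact that the required exponents are \emph{even} is automatic with $\alpha_0 = c^2$. The only things left to verify are the $C^k$-quality of the approximation and that all roots $\alpha_i$ lie strictly outside $\overline{D}$.

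The first technical step is to upgrade $g$ to a function holomorphic on an \emph{open} neighborhood of $\overline{D}$. I would set $g_\rho(z) := g(\rho z)$ for $\rho \in (0,1)$. Then $g_\rho$ is holomorphic on the disc $\{|z| < 1/\rho\}$, which is an open neighborhood of $\overline{D}$, and the identity $\partial^\alpha g_\rho(z) = \rho^{|\alpha|} (\partial^\alpha g)(\rho z)$, combined with the uniform continuity on $\overline{D}$ of $\partial^\alpha g$ for $|\alpha| \leq k$, yields $\|g_\rho - g\|_{C^k(\overline{D})} \to 0$ as $\rho \to 1^-$. Choosing $\rho$ close enough to $1$ disposes of half the budget $\varepsilon_0$, and $g_\rho$ inherits the property $|g_\rho| > 0$ on $\overline{D}$ from $g$.

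Since $g_\rho$ is non-vanishing and holomorphic on the simply-connected neighborhood $\{|z| < 1/\rho\}$, a holomorphic logarithm exists there, and I would define $h := \exp(\tfrac{1}{2}\log g_\rho)$, an analytic square root of $g_\rho$ on that neighborhood. Let $q_n$ denote the $n$-th Taylor polynomial of $h$ at the origin. Because $h$ is holomorphic on a disc strictly larger than $\overline{D}$, standard Cauchy estimates give geometric convergence $q_n \to h$ in $C^k(\overline{D})$. Since $|h|$ attains a positive minimum on the compact set $\overline{D}$, for $n$ sufficiently large the reverse triangle inequality forces $|q_n| > 0$ on $\overline{D}$; consequently every zero $\alpha_i$ of $q_n$ satisfies $|\alpha_i| > 1$, as required by the lemma.

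To conclude, I would combine the two approximations through the triangle inequality
$$
\|g - q_n^2\|_{C^k(\overline{D})} \leq \|g - g_\rho\|_{C^k(\overline{D})} + \|h^2 - q_n^2\|_{C^k(\overline{D})},
$$
bounding the second term by means of $h^2 - q_n^2 = (h - q_n)(h + q_n)$, the Leibniz rule, and the fact that $\|q_n\|_{C^k(\overline{D})}$ remains bounded as $n \to \infty$. Writing $q_n(z) = c \prod_{i=1}^M (z-\alpha_i)^{N_i}$ and setting $\alpha_0 := c^2$ then delivers the required approximant. I do not anticipate a substantial obstacle in this argument; the most delicate point is arranging that the zeros of $q_n$ lie \emph{strictly} outside $\overline{D}$, which is where the uniform positive lower bound on $|h|$ over the compact set $\overline{D}$, together with the $C^0$-closeness of $q_n$ to $h$, is essential.
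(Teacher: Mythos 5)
Your proof is correct and follows essentially the same route as the paper's: construct an analytic square root of $g$ via a holomorphic logarithm, approximate it by a polynomial which is then automatically zero-free on $\overline{D}$ (so all its roots satisfy $|\alpha_i|>1$), and square. The only real difference is that you justify the polynomial approximation step explicitly, via the dilation $g_\rho(z)=g(\rho z)$ and Taylor polynomials of the square root of $g_\rho$, whereas the paper simply asserts the existence of a $C^k(\overline{D})$-approximating polynomial for the square root $w$.
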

\begin{proof}
Define $h:\overline{D}\to \mathbb{C}$ by
$$
h(z)=\int_0^z\frac{g'(w)}{g(w)}dw +\log(g(0))
$$
Then, since $U$ is simply-connected and $|g|>0$ on $\overline{D}$, $h$ is analytic in $D$ with smooth extension to $\overline{D}$. In addition,
$$
w(z)=e^{\frac{1}{2}h(z)}
$$
is an analytic function on $D$ such that $w^2(z)=g(z)$ and $w$ extends smoothly to $\overline{D}$. Then, for all $\e>0$, there is a polynomial $p_\e$ such that 
$$
\|w(z)-p_\e(z)\|_{C^{k}(\overline{D})}<\e\min(\|w(z)\|_{C^{k}(\overline{D})},1)
$$
In particular, since $|g|>c>0$ on $\overline{D}$, for $0<\e$ small enough, $p_\e$ has no zeros in $\overline{D}$. Hence, 
$$
p_\e=\beta_0\prod_{i=1}^M(z-\beta_i)^{N_i}
$$
for some $|\beta_0|>0$, $|\beta_i|>1$, $i=1,\dots, M$. Multiplying by $w+p_\e$, we have
\begin{align*}
\|g(z)-p^2_\e(z)\|_{C^k(\overline{D})}&=\|(w-p_\e)(w+p_\e)\|_{C^k(\overline{D})}\\
&\leq C_k\|(w-p_\e)\|_{C^{k}(\overline{D})}\|(w+p_\e)\|_{C^{k}(\overline{D})}\\
&\leq C_k\e(2+\e)\|w\|_{C^{k}(\overline{D})}
\end{align*}
Choosing $\e=\frac{\e_0}{C_k}\min(\frac{1}{3\|w\|_{C^{k}(\overline{D})}},1)$ proves the result with $\alpha_0=\beta_0^2$ and $\alpha_i=\beta_i$.
\end{proof}

This result can be used to approximate any analytic domain by a BBLCN
domain:
\begin{cor}
\label{c:dense}
For any analytic, bounded, simply-connected domain $\Omega$, $k>0$, and $\e_0>0$ there is a BBLCN domain $\Omega_{\e_0}$ and $g_{\e_0}\in C^\infty(\partial\Omega)$ such that with $\nu$ the outward unit normal to $\Omega$,
\begin{equation}\label{e:pert}
 \partial\Omega_{\e_0}=\{ x+\nu g_{\e_0}(x)\mid x\in \partial\Omega\},\qquad \|g_{\e_0}\|_{C^k(\partial\Omega)}<\e_0.
\end{equation}
\end{cor}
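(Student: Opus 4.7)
The plan is to start from a Riemann mapping function for $\Omega$, apply Lemma~\ref{l:bandLimitedApprox} to its derivative to produce an approximation by the square of a polynomial, and then integrate to obtain a perturbed mapping function whose image is a BBLCN domain close to $\Omega$ in the normal direction.

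Concretely, let $f:\overline{D}\to \overline{\Omega}$ be a mapping function, so $f|_D$ is a biholomorphism and $|\partial_z f|>0$ on $\overline{D}$. Since $\partial_z f$ is smooth on $\overline{D}$, analytic in $D$, and nowhere zero, Lemma~\ref{l:bandLimitedApprox} (applied with $g=\partial_z f$, smoothness index $k+1$, and tolerance $\epsilon$) yields a polynomial $p_\epsilon(z)=\beta_0\prod_{i=1}^M(z-\alpha_i)^{N_i}$ with $|\alpha_i|>1$ and $|\beta_0|>0$ such that $\|\partial_z f-p_\epsilon^2\|_{C^{k+1}(\overline{D})}<\epsilon$. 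Define
\[
\tilde f_\epsilon(z):=f(0)+\int_0^z p_\epsilon^2(w)\,dw.
\]
Then $\tilde f_\epsilon$ has exactly the form required of a BBLC mapping function, and integration in $z$ gives $\|f-\tilde f_\epsilon\|_{C^{k+2}(\overline{D})}\leq C\epsilon$.

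The next step is to verify that $\tilde f_\epsilon$ is a biholomorphism from $D$ onto an analytic domain $\Omega_\epsilon$ and that $\partial\Omega_\epsilon$ is a normal graph over $\partial\Omega$. The derivative $\partial_z \tilde f_\epsilon = p_\epsilon^2$ is nonvanishing on $\overline{D}$ since $\alpha_i\notin\overline{D}$, so $\tilde f_\epsilon$ is locally injective. Global injectivity follows by considering $F_\epsilon:=f^{-1}\circ \tilde f_\epsilon$: for $\epsilon$ small, $F_\epsilon$ is well-defined on $\overline{D}$ and $C^{k+1}$-close to the identity (using the uniform bound on $\partial_z f^{-1}$ coming from $|\partial_z f|>0$ on $\overline{D}$), hence a diffeomorphism of $\overline{D}$. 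Consequently $\tilde f_\epsilon=f\circ F_\epsilon$ is a biholomorphism onto $\Omega_\epsilon:=\tilde f_\epsilon(D)$, which has analytic boundary. To represent $\partial\Omega_\epsilon$ as a normal graph, I would use that the tubular coordinate map $(x,t)\mapsto x+t\nu(x)$ is a smooth diffeomorphism from $\partial\Omega\times(-\delta_0,\delta_0)$ onto a neighborhood of $\partial\Omega$, and that $\partial\Omega_\epsilon$ lies in this tube for small $\epsilon$. Composing its inverse with the parametrization $\theta\mapsto \tilde f_\epsilon(e^{i\theta})$ and applying the implicit function theorem then yields a unique $g_\epsilon\in C^k(\partial\Omega)$ with $\partial\Omega_\epsilon=\{x+\nu(x)g_\epsilon(x):x\in\partial\Omega\}$ and $\|g_\epsilon\|_{C^k(\partial\Omega)}\leq C\epsilon$. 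Taking $\epsilon<\epsilon_0/C$ achieves the required bound.

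Finally, the BBLCN requirement demands $|\partial_z\tilde f_\epsilon|$ be non-constant on $\partial D$. Since $|\partial_z\tilde f_\epsilon|=|p_\epsilon|^2$ there, and a polynomial of constant modulus on $\partial D$ with no zeros in $\overline{D}$ must be constant, non-constancy holds whenever $\deg p_\epsilon\geq 1$. In the rare case that the approximation returns $\deg p_\epsilon=0$ (which only occurs when $\partial_z f$ is itself nearly constant, i.e., $\Omega$ is nearly a disc), I would replace $p_\epsilon$ by $p_\epsilon\cdot(1+\eta(z-\alpha))$ with $|\alpha|>1$ and $\eta>0$ chosen sufficiently small to keep the total $C^{k+1}$-error below the threshold needed to drive $\|g_\epsilon\|_{C^k}<\epsilon_0$; this perturbation preserves the BBLC structure and guarantees non-constancy. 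The main obstacle I anticipate is the normal-graph representation described in the previous paragraph: converting $C^{k+2}$-closeness of the mapping functions into a controlled $C^k$ bound for $g_\epsilon$ requires careful bookkeeping, particularly in handling the change of parametrization of $\partial\Omega$ induced by $F_\epsilon$.
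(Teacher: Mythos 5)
Your proposal is correct and follows essentially the same route as the paper: apply Lemma~\ref{l:bandLimitedApprox} to $\partial_z f$, integrate $p_\e^2$ to obtain a nearby BBLC mapping function, observe it remains a biholomorphism for small $\e$, and then express the new boundary as a normal graph over $\partial\Omega$. The only divergence is cosmetic --- you package the final step via tubular-neighborhood coordinates and the nearest-point projection, whereas the paper runs an explicit homotopy $F(t,\theta,\omega,s)$ and continues the solution from $t=0$ to $t=1$ --- but both reduce to the implicit function theorem, and your added details on global injectivity and on forcing $|p_\e|$ to be non-constant on $\partial D$ are sound.
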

\begin{proof}
Since $\Omega$ is analytic, there is $f:\overline{D}\to \mathbb{C}$ analytic such that $f|_{D}:D\to \Omega$ is a biholomorphism and $|\partial_z f|>0$ on $D$. Moreover, by~\cite{BeKr}, $\partial_zf$ has a smooth extension to $\overline{D}$. Then, applying Lemma~\ref{l:bandLimitedApprox} with $g=\partial_zf(z)$ gives 
$$p_\e=\alpha_0\prod_{i=1}^M(z-\alpha_i)^{N_i}$$ a polynomial with no roots in $\overline{D}$ such that
$$
\|\partial_zf(z)-p^2_\e(z)\|_{C^{\max(k,1)}(\overline{D})}<\e.
$$
Note also that adjusting $p$ if necessary we may assume that the
restriction of $|p_\e|$ to $\partial D$ is not constant.  Then,
defining
\begin{equation}\label{e:feps}
  f_\e:=\int_0^z p^2_\e(w)dw+f(0)
\end{equation}
we have
$$
\|f_\e-f\|_{C^{\max(k+1,2)}(\overline{D})}<\e,\qquad \partial_zf_\e=p^2_\e,
$$
so that $\big|\partial_z f_\e\big||_{_{\partial D}}$ is non-constant
and band limited.  Moreover, since $f$ is a biholomorphism, for $\e>0$
small enough, $f_\e$ is also a biholomorphism. We next show that since
$\|f_\e-f\|_{C^{\max(k+1,2)}(\overline{D})}<\e$, for $\e>0$ small
enough the curve
$$
\partial\Omega_\e=\{f_\e(z)\mid |z|=1\}
$$
can be expressed in the form~\eqref{e:pert}. To do this let 
$$
F(t,\theta,\omega,s)=f(e^{i\theta})-tf_\e(e^{i(\omega+\theta)})-(1-t)f(e^{i(\omega+\theta)})-sf'(e^{i\theta})e^{i\theta}
$$
and note that $F(1,\theta,\omega,s)=0$ if and only if
$$
f_\e(e^{i(\omega+\theta)})=f(e^{i\theta})\pm s\nu(\theta).
$$
Therefore, we aim to find $s=s(\theta)$ and $\omega=\omega(\theta)$ such that $F(1,\theta,\omega(\theta),s(\theta))=0$.
Note that
$$
\begin{aligned}
\partial_sF&=-f'(e^{i\theta})e^{i\theta}\\
\partial_{\omega}F&= -ie^{i(\omega+\theta)}(f'(e^{i(\omega+\theta)})+t(f_\e'(e^{i(\omega+\theta)})-f'(e^{i(\omega+\theta)}))
\end{aligned}
$$
In particular, 
$$
\partial_{\omega}F=i\partial_sF+O(\e)+O(|\omega|).
$$
Therefore, there is $\delta>0$, $\e_0>0$ such that for $0<\e<\e_0$,
$|\omega_0|<\delta$, $t_0\in(-1,2)$, and $s_0\in[-1,1]$ if
$F(t_0,\theta_0,\omega_0,s_0)=0$, then for $|\omega_0|<\delta$ and
$|t-t_0|<\delta$, $\omega=\omega(t,\theta)$ and $s=s(t,s)$ are the
unique solutions of $F(t,\theta,\omega,s)=0$. In particular, since
$F(0,\theta,0,0)=0$, the solutions $s=s(t,\theta)$ and
$\omega=\omega(t,\theta)$ can be continued as functions of $t$ as long
as $|\omega(t,\theta)|$ remains small. 

We next note that
$$
\begin{pmatrix} \partial_t \omega\\\partial_ts\end{pmatrix}=\begin{pmatrix}\partial_\omega F&\partial_sF\end{pmatrix}^{-1}\partial_tF=O(\|f_\e-f\|_{L^\infty})=O(\e),
$$
and, therefore,
$$
|\omega(t,\theta)|+|s(t,\theta)|\leq \int_0^{t}|\partial_t\omega(r,\theta)|+|\partial_ts(r,\theta)|dr\leq C t\e.
$$
Hence for $\e$ small enough the solutions $\omega(t,\theta)$ and
$s(t,\theta)$ continue to $t=1$ and satisfy
$$
|\omega(1,\theta)|+|s(1,\theta)|\leq C\e.
$$
Again, using the implicit function theorem, this implies that
$\omega(\theta):=\omega(1,\theta)$ and $s(\theta):=s(1,\theta)$ are
$2\pi$-periodic. Differentiating $k$ times now yields
$$
|\partial_\theta^ks|\leq C_k\e,
$$
finishing the proof by setting $g_{\e_0}=\pm s$ and shrinking $\e>0$ as necessary. (Here the $\pm$ corresponds to whether $f(e^{i\theta})$ is positively ($-$) or negatively ($+$) oriented.)
\end{proof}

\begin{rem}\label{r:fe}
  Since the map $f_\e$ in equation~\eqref{e:feps} may send 0 to a point
  $z_0$ close to the boundary, it is interesting to see how the
  Steklov eigenfunctions rearrange their nodal sets in such a way that
  Theorems~\ref{t:nodal} and~\ref{thm:main} are satisfied on the image
  of $f_\e$. To demonstrate this let $|a|<1$, consider the
  biholomorphic function $f(z):=\frac{z-a}{\bar{a}z-1}$, and let
  $f_\e$ denote the approximant of $f$ given by equation~(\ref{e:feps})
  with
  \begin{equation}\label{e:fe}
    p_\e(z)=i \sqrt{1- |a|^2}\sum_{j=0}^N(\bar{a}w)^j\quad
    \mbox{with $N=20$ and $a=0.8$.}
\end{equation}  
(This polynomial was obtained as the $N$-th order Taylor polynomial of
$\sqrt{\partial_z f}$.) In this case, according to Theorems~\ref{t:nodal}
and~\ref{thm:main}, the Steklov eigenfunctions should be slowly
oscillating in a $\sigma$ independent neighborhood of $z_0$.
Figure~\ref{f:approx} displays corresponding Steklov eignfunction or
various orders as well as a typical eigenfunction for the exact
disc. Note the dramatic change that arises in the Steklov
eigenfunctions from a barely visible boundary perturbation of the
disc.
\end{rem}

\begin{figure}
\includegraphics[width=0.45\textwidth]{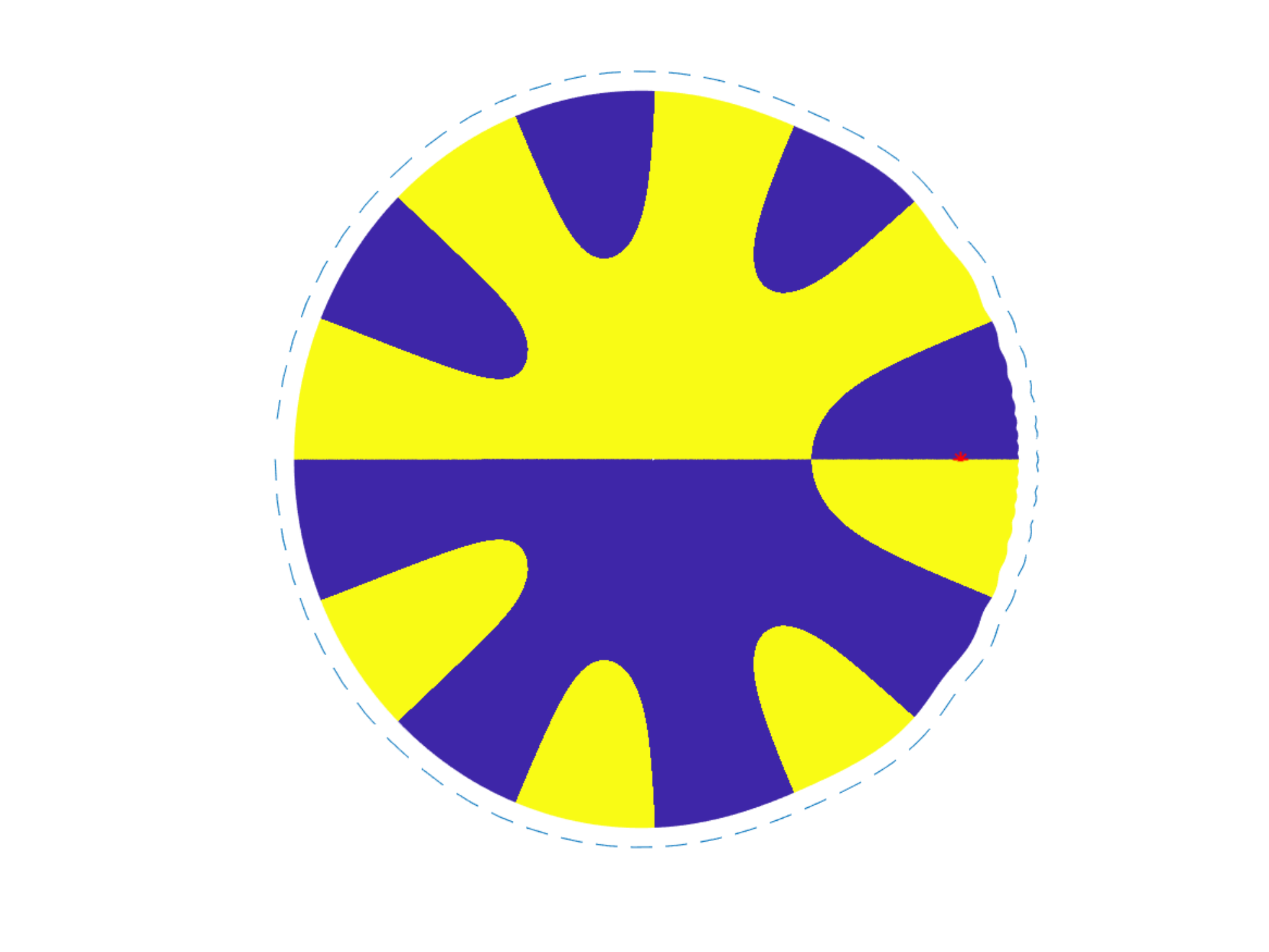}
\includegraphics[width=0.45\textwidth]{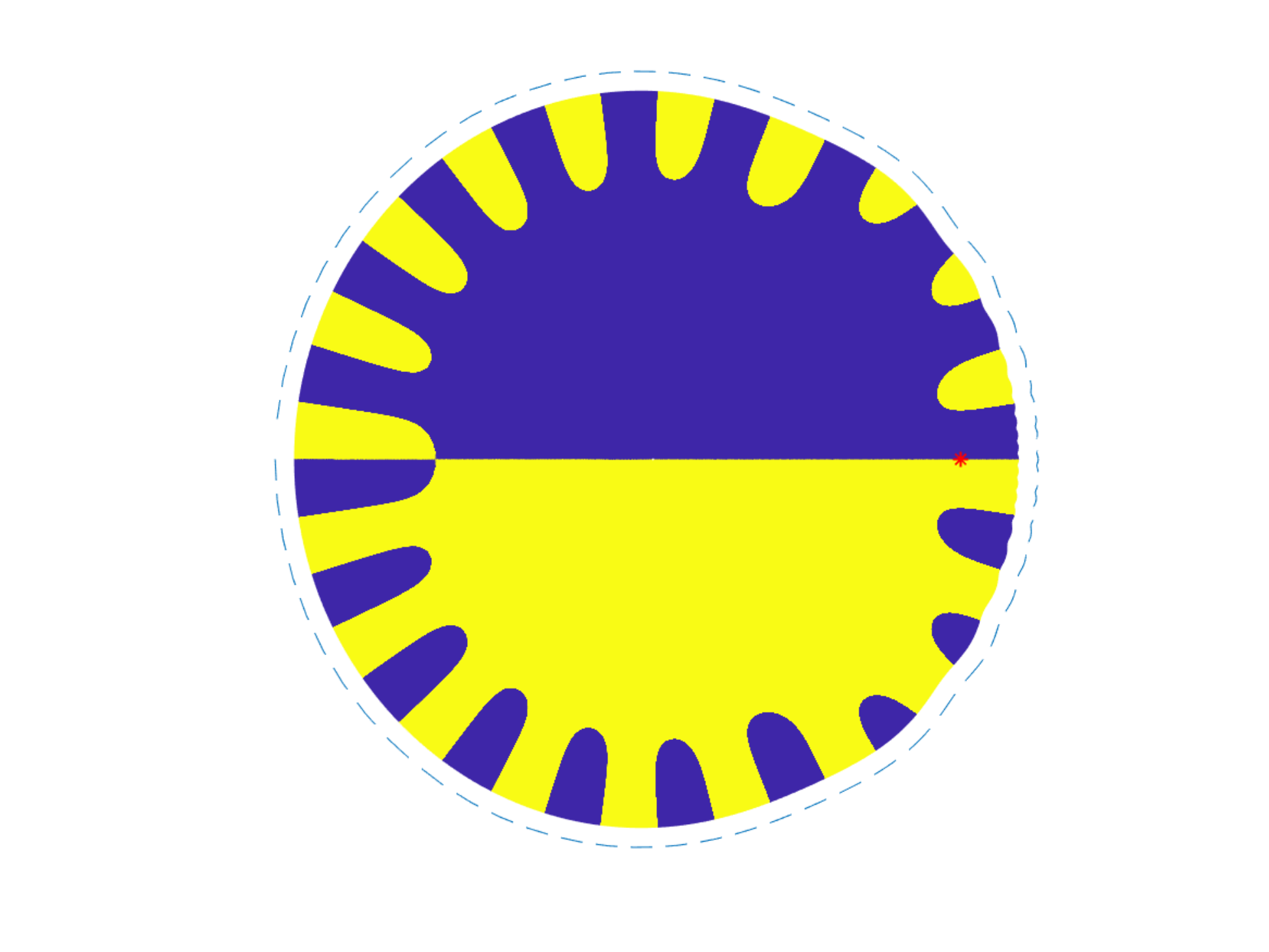}
\includegraphics[width=0.45\textwidth]{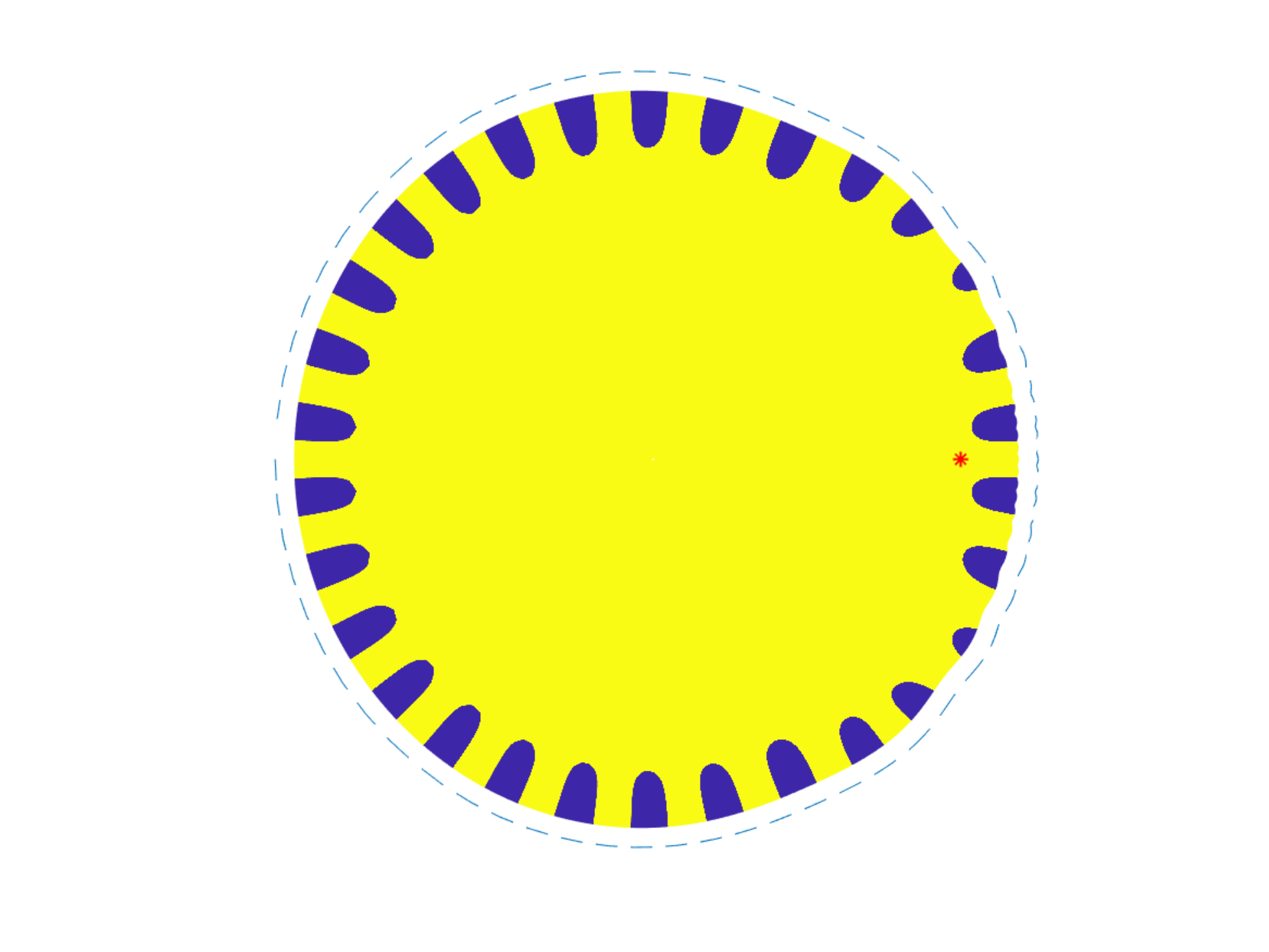}
\includegraphics[width=0.45\textwidth]{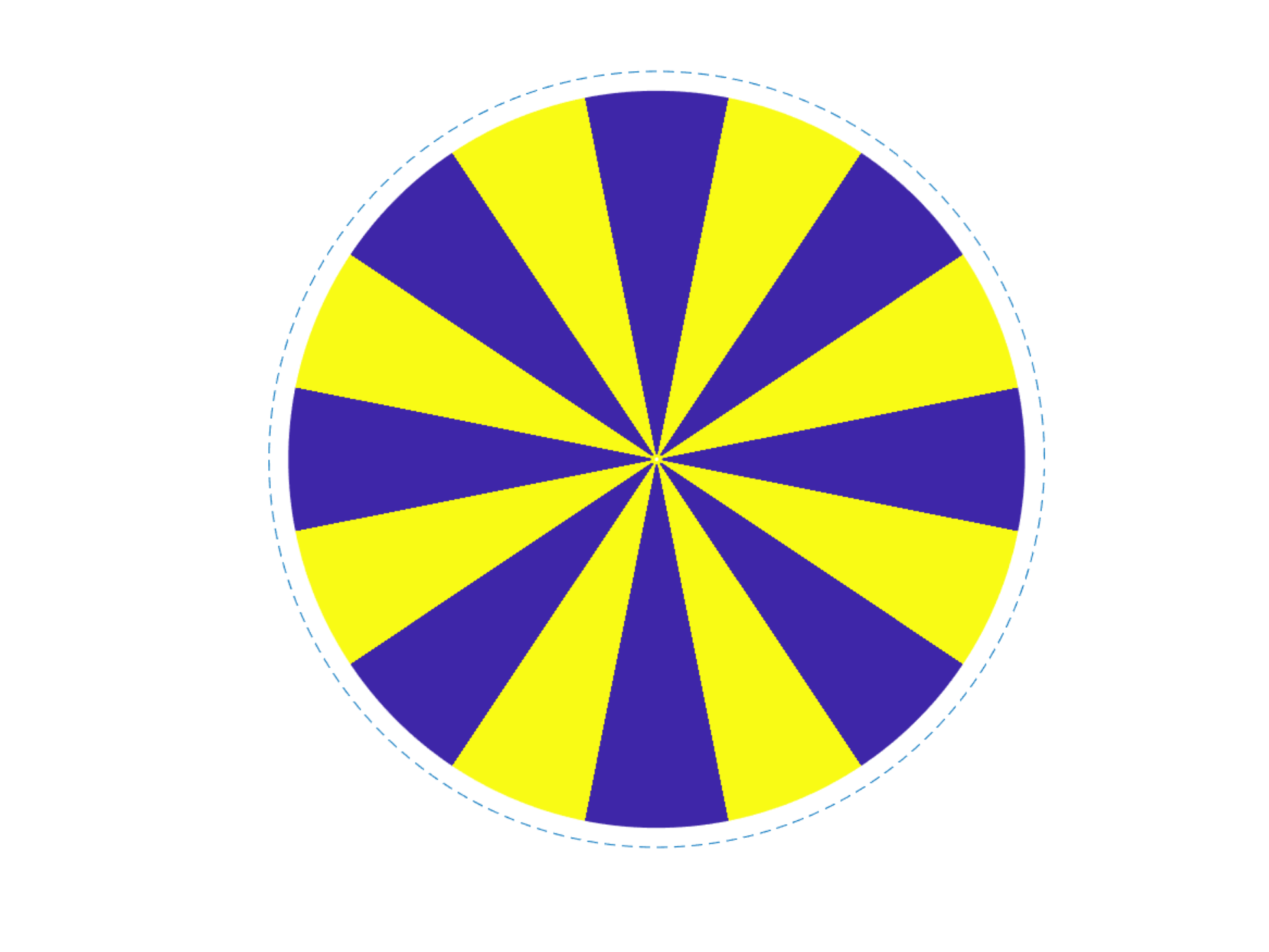}
\caption{\label{f:approx} Steklov eigenfunctions on the domain
  $\Omega$ whose mapping function, which is given by
  equation~\eqref{e:fe}, maps the center of the disk to the point
  $z_0 = (0.8,0)$ (marked by red asterisks in the figures). The
  corresponding Steklov eigenvalues are given by $\sigma_{16}=7.9642$
  (top left), $\sigma_{40}=19.8173$ (top right), and
  $\sigma_{60}=29.8197$ (bottom left). Note that, according to
  Corollary~\ref{c:dense} the set $\Omega$ is a BBLCN approximation to
  the disk. As predicted by Theorem~\ref{thm:main}, oscillations avoid
  a region around $z_0$ for high $\sigma$. The bottom-right image
  displays a typical eigenfunction on the exact disc. Note the
  dramatic change that arises in the Steklov eigenfunctions from a
  barely visible boundary perturbation of the disc.}
\end{figure}

\section{BBLCN domains and tunneling Steklov problems}
\label{s:tunnel}
This section presents a proof of Theorem~\ref{t:tunnel}. In
preparation for that proof, let $\Omega\subset \mathbb{C}$ be a BBLCN
domain, and denote by $f$ the corresponding mapping function. Define
$$
\mathcal{F}\left[ \big|\partial_zf\big|\right](n):=a_n,\quad n\in\mathbb{Z} \quad\left( a_0:=\frac{1}{2\pi}\int_0^{2\pi}|\partial_z f(e^{i\theta})|\,d\theta>0\right).
$$
Since $\Omega$ is a BBLCN domain, the function
$\big|\partial_z f\big||_{_{\partial D}}$ is band limited and
$\big|\partial_z f\big||_{_{\partial D}}$ is not identically
constant. It follows that
$$
m_0:=\sup \{|n|: |a_n|\neq 0\}
$$
satisfies $1\leq m_0<\infty$.

Denoting by $\hat{u}(n)$ the boundary Fourier coefficients of an
eigenfunction $u$, the corresponding boundary Fourier coefficients of
$\partial_r u$ are given by $|n|\hat{u}(n)$. Thus, a solution
to~\eqref{e:steklovConformal} is uniquely determined as an $\ell^2$
solution to the equation
\begin{equation}
 \label{e:steklovFourier1}
 |n|\hat{u}(n)=\sigma \mathcal{F} \left[u\big |\partial_z f\big |\right ](n)\quad n\in\mathbb{Z}.
\end{equation}
In what follows we may, and do, assume that solutions $\hat{u}$ have
$\ell^2$-norm equal to one.

\subsection*{Proof of Theorem~\ref{t:tunnel}}

Since
$$
\mathcal{F}\left[ u\big|\partial_z f\big|\right]=\sum_{m} a_{m}\hat{u}(n-m),
$$
it follows that~\eqref{e:steklovFourier1} can be re-expressed in the form
\begin{equation}
\label{e:steklovFourier}
|n|\hat{u}(n)=\sum_m \sigma a_m\hat{u}(n-m).
\end{equation}
From~\eqref{e:steklovFourier} we obtain
$$
a_{-m_0}\hat{u}(n+m_0)=\frac{|n|}{\sigma}\hat{u}(n)-\sum_{m\neq -m_0}a_m\hat{u}(n-m),
$$
and, then, for all $|n|\leq K\sigma $,
\begin{align*}
|\hat{u}(n+m_0)|&\leq |a_{-m_0}|^{-1}\Big(\frac{||n|-\sigma a_0|}{\sigma}|\hat{u}(n)|+\sum_{m\neq 0,-m_0}|a_m||\hat{u}(n-m)|\Big)\\
&\leq |a_{-m_0}|^{-1}\Big(\frac{||n|-\sigma a_0|}{\sigma}|\hat{u}(n)|+\sum_{\substack{m=-m_0+1\\m\neq 0}}^{m_0}|a_m||\hat{u}(n-m)|\Big)\\
&\leq |a_{-m_0}|^{-1}\max(K,\|a_m\|_{\ell^\infty})\sum_{k=n-m_0}^{n+m_0-1}|\hat{u}(k)|.
\end{align*}
The second inequality follows from the fact that $a_n\equiv 0$ for
$|n|\geq m_0$, while the third one results from the relation $a_0>0$
and the positivity, $\sigma > 0$, of all nontrivial eigenvalues
$\sigma$, which imply that
$$
||n|-\sigma a_0|\leq\max (|n|,\sigma|a_0|){\leq \sigma(\max(K,\|a_m\|_{\ell^\infty}))}.
$$

Making an identical argument, but solving for $\hat{u}(n-m_0)$, and using that $|a_{m_0}|=|a_{-m_0}|\neq 0$, we have for all $|n|\leq K\sigma$,
\begin{equation}
\label{e:est1}
\begin{aligned}
|\hat{u}(n+m_0)|&\leq |a_{m_0}|^{-1}\max(2,\|a_m\|_{\ell^\infty})\sum_{k=n-m_0}^{n+m_0-1}|\hat{u}(k)|,\\
|\hat{u}(n-m_0)|&\leq |a_{m_0}|^{-1}\max(2,\|a_m\|_{\ell^\infty})\sum_{k=n-m_0+1}^{n+m_0}\!\!\!\!\!\!|\hat{u}(k)|.
\end{aligned}
\end{equation}

We now use equation~\eqref{e:est1} to prove the first half of our
tunneling estimate.
\begin{lemma}
\label{l:induct}
Let $m\in \mathbb{Z}$, $K>0$, and
$$
A_m:=\Big(\sum_{k=m-m_0}^{m+m_0}|\hat{u}(k)|^2\Big)^{\frac{1}{2}}.
$$
Then, there exists $C_0>0$ so that for all $\sigma>0$ and for
$-K\sigma \leq n+m\leq K\sigma$ we have
\begin{equation}
\label{e:induct}
|\hat{u}(n+m)|\leq C_0^{|n|}A_m.
\end{equation}
\end{lemma}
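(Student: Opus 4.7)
\textbf{Proof plan for Lemma~\ref{l:induct}.} The plan is a straightforward induction on $|n|$, propagating the size estimate outward from the ``reference window'' $\{m-m_0,\dots,m+m_0\}$ using the one-step recurrences~\eqref{e:est1}. Set $C_1:=|a_{m_0}|^{-1}\max(2,\|a_m\|_{\ell^\infty})$ and pick $C_0:=\max(1,\,2m_0\,C_1)$; the claim is that the stated bound holds with this value of $C_0$ for all $n$ satisfying $-K\sigma\le n+m\le K\sigma$ (absorbing the harmless shift by $m_0$ in~\eqref{e:est1} into the constant $K$, which requires no change in the conclusion).

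First I would treat the base case $|n|\le m_0$. Here $n+m$ lies in the index set $\{m-m_0,\dots,m+m_0\}$, so
$$
|\hat{u}(n+m)|\le\Big(\sum_{k=m-m_0}^{m+m_0}|\hat{u}(k)|^2\Big)^{1/2}=A_m\le C_0^{|n|}A_m,
$$
using $C_0\ge1$.

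Next I would do the inductive step on the positive side ($n\ge m_0+1$), assuming the bound for all indices $n'$ with $|n'|<n$. Applying the first inequality in~\eqref{e:est1} at the shifted index $n+m-m_0$ (which still lies in the allowed range by the remark above) gives
\begin{equation*}
|\hat{u}(n+m)|\le C_1\sum_{k=n+m-2m_0}^{n+m-1}|\hat{u}(k)|
= C_1\sum_{j=n-2m_0}^{n-1}|\hat{u}(j+m)|.
\end{equation*}
For $n\ge2m_0$ every $j$ in the sum is nonnegative and at most $n-1$, so $|j|\le n-1$; for $m_0+1\le n\le2m_0-1$ the worst case is $|j|=2m_0-n\le m_0-1<n$. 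In either case the inductive hypothesis applies and yields $|\hat{u}(j+m)|\le C_0^{|j|}A_m\le C_0^{n-1}A_m$. Since the sum has exactly $2m_0$ terms, this gives
$$
|\hat{u}(n+m)|\le 2m_0\,C_1\,C_0^{n-1}A_m\le C_0^{n}A_m,
$$
by the choice of $C_0$. A symmetric argument using the second inequality in~\eqref{e:est1} handles the case $n\le-m_0-1$, closing the induction.

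The only subtlety worth checking carefully is that, at each step, the index at which~\eqref{e:est1} is invoked lies within the range $|\cdot|\le K\sigma$ where that estimate is valid. This is essentially automatic: the shift between the ``input'' and ``output'' indices of~\eqref{e:est1} is exactly $m_0$, independent of $\sigma$, so the hypothesis $|n+m|\le K\sigma$ permits the recursion throughout, at worst after enlarging $K$ by an additive $m_0$ (which may be absorbed into $K$ since $K$ is an arbitrary fixed parameter). No delicate arithmetic is needed; the whole proof is a bookkeeping exercise once the constant $C_0$ is tuned to dominate the growth factor $2m_0 C_1$ coming from one application of~\eqref{e:est1}.
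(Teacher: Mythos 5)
Your proposal is correct and follows essentially the same route as the paper's own proof: an outward induction from the central window $\{m-m_0,\dots,m+m_0\}$ driven by the one-step recurrences~\eqref{e:est1}, with a symmetric argument on the negative side. The only difference is cosmetic --- you bound the $2m_0$ previously-controlled terms by their count times the largest one (forcing $C_0\geq 2m_0C_1$), whereas the paper sums the geometric series $\sum_k C_0^{|k|}$ and takes $C_0\geq 2C_1+1$; both choices of constant close the induction identically.
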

\begin{proof}
We will assume $m\geq 0$ since the other case follows similarly. The cases of $n=-m_0,\dots,m_0$ are clear if we take $C_0\geq 1$. Suppose~\eqref{e:induct} holds for $-m_0\leq n\leq \ell$ with $m_0\leq \ell$. Then, by~\eqref{e:est1}, 
\begin{align*}
|\hat{u}(m+\ell+1)|&\leq |a_{m_0}|^{-1}\max(K,\|a_m\|_{\ell^\infty})\sum_{k=\ell-2m_0+1}^{\ell}|\hat{u}(k+m)|\\
&\leq |a_{m_0}|^{-1}\max(K,\|a_m\|_{\ell^\infty})\sum_{k=\ell-2m_0+1}^{\ell}C_0^{|k|}A
\end{align*}
{Now, if $m_0\leq \ell<2m_0$, then
\begin{align*}
|\hat{u}(m+\ell+1)|&\leq |a_{m_0}|^{-1}\max(K,\|a_m\|_{\ell^\infty})\Big(\sum_{k=0}^{\ell}C_0^{k}+\sum_{k=1}^{2m_0-\ell-1}C_0^k\Big)A\\
&\leq |a_{m_0}|^{-1}\max(K,\|a_m\|_{\ell^\infty})\Big(\frac{C_0^{\ell+1}-1+C_0^{2m_0-\ell+1}-C_0}{C_0-1}\Big)A
\end{align*}
In particular, taking
$$
C_0\geq 2|a_{m_0}|^{-1}\max(K,\|a_m\|_{\ell^\infty})+1
$$
we have 
$$
|\hat{u}(m+\ell+1)|\leq C_0^{\ell+1}A.
$$
Next, if $2m_0\leq \ell$, then} 
\begin{align*}
|\hat{u}(\ell+m+1)|&\leq |a_{m_0}|^{-1}\max(K,\|a_m\|_{\ell^\infty})A\frac{ C_0^{\ell+1}-C_0^{\ell-2m_0+1}}{C_0-1}
\end{align*}
Taking ${C_0\geq 2 |a_{m_0}|^{-1}\max(K,\|a_m\|_{\ell^\infty})+1}$
completes the proof for $-m_0\leq n\leq K\sigma-m$. 

An almost identical argument gives the $-K\sigma-m \leq n\leq 0$ case.
\end{proof}

\section{Analysis of Tunneling Steklov Problems}
\label{s:fourier}

The proof of Theorem~\ref{thm:main} now follows in two steps. First,
we show that, for eigenfunctions of any tunneling Steklov problem, the
boundary Fourier coefficients of low frequency contain a mass no
smaller than exponential in $\sigma$. To finish the proof, we use the
fact that the harmonic extension of $e^{in\theta}$ decays exactly as
$r^{|n|}$. Examining the solution on the ball of radius $\delta>0$ for
some $\delta$ small enough, it will be shown that the low frequencies
dominate the behavior of $u$.

\begin{lemma}
\label{l:lowerBound}
Suppose that $\Omega$ has tunneling Steklov problem. Then there exist $\sigma_0>0$ so that for all $m>0$ there is $C>0$ such that for $\sigma>\sigma_0$,
$$
e^{-C\sigma}\|\hat{u}\|_{\ell^2}\leq \Big(\sum_{k= m-m_0}^{m_0}|\hat{u}(k)|^2\Big)^{\frac{1}{2}}=:A_m.
$$
\end{lemma}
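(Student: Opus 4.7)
The plan is to bound $\|\hat u\|_{\ell^2}$ from above by $e^{C\sigma}A_m$ by splitting the sum into a ``classically allowed'' piece $|k|\leq K\sigma$ and a ``classically forbidden'' piece $|k|>K\sigma$, for a large but fixed $K$ to be chosen. On the allowed region the tunneling hypothesis gives a pointwise bound directly in terms of $A_m$, and the total number of terms is polynomial in $\sigma$, so after summing one loses only an exponential factor. On the forbidden region, a direct energy estimate coming from the Steklov equation in Fourier form will force the tail to be negligible compared to $\|\hat u\|_{\ell^2}^2$.

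More concretely, first I would apply Definition~\ref{d:tunnel} to every $|k|\leq K\sigma$ to obtain
$$
\sum_{|k|\leq K\sigma}|\hat u(k)|^2 \leq A_m^2 \sum_{|k|\leq K\sigma}C_0^{2|k-m|}
\leq A_m^2 (2K\sigma+1)\, C_0^{\,2(K\sigma+|m|)} \leq A_m^2\, e^{c\sigma}
$$
for $\sigma$ large, with $c=c(K,C_0,m)$. Here $m$ is fixed, so the dependence of $c$ on $m$ is harmless.

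Second, I would control the tail via equation~\eqref{e:steklovFourier}: since $a_j=0$ for $|j|>m_0$, Cauchy--Schwarz gives
$$
n^2|\hat u(n)|^2 = \sigma^2\Big|\sum_{|j|\leq m_0} a_j \hat u(n-j)\Big|^2 \leq \sigma^2 \|a\|_{\ell^2}^2 \sum_{|j|\leq m_0}|\hat u(n-j)|^2.
$$
Summing over $n\in\mathbb Z$ and interchanging the two sums yields $\sum_n n^2|\hat u(n)|^2 \leq C\sigma^2 \|\hat u\|_{\ell^2}^2$, and Chebyshev's inequality then produces
$$
\sum_{|k|>K\sigma}|\hat u(k)|^2 \leq \frac{1}{K^2\sigma^2}\sum_n n^2|\hat u(n)|^2 \leq \frac{C}{K^2}\|\hat u\|_{\ell^2}^2.
$$
Fixing $K$ so large that $C/K^2\leq 1/2$ makes the forbidden-region mass at most $\tfrac12\|\hat u\|_{\ell^2}^2$, so that at least half of the total $\ell^2$-mass lives in $|k|\leq K\sigma$. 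Combining the two estimates gives $\tfrac12\|\hat u\|_{\ell^2}^2 \leq A_m^2 e^{c\sigma}$, which rearranges to the claimed bound $e^{-C\sigma}\|\hat u\|_{\ell^2}\leq A_m$ upon taking $C$ slightly larger than $c/2$.

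The main technical ingredient is really just the observation that $|n|\hat u(n)$ is $\sigma$ times a short convolution of $\hat u$, which forces all but a $\sigma^{-2}$ fraction of the $\ell^2$ mass to sit in $|n|\lesssim \sigma$. The tunneling hypothesis is doing all the hard work pointwise; this lemma simply upgrades it to an $\ell^2$ statement, so I do not expect a serious obstacle beyond carefully tracking the constants' dependence on $K$, $m_0$, $C_0$, and $m$.
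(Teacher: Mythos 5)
Your proof is correct, and its second half takes a genuinely different route from the paper. The first step --- applying the tunneling bound $|\hat u(k)|\le C_0^{|k-m|}A_m$ termwise over $|k|\le K\sigma$ and summing, losing only a factor $e^{c\sigma}$ --- is exactly what the paper does (via Lemma~\ref{l:induct}, summed over $|k-m|\le 2\sigma$). Where you diverge is in controlling the tail: the paper cites \cite[Corollary 1.3]{GalkToth}, which gives the much stronger statement that all but an $O(e^{-\sigma/C})$ fraction of $\|\hat u\|_{\ell^2}^2$ lives in $|k-m|\le 2\sigma$, whereas you derive the weaker but entirely sufficient statement that at least half the mass lives in $|k|\le K\sigma$ directly from the Fourier form~\eqref{e:steklovFourier} of the Steklov equation, via $\sum_n n^2|\hat u(n)|^2\le C\sigma^2\|\hat u\|_{\ell^2}^2$ and Chebyshev. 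This makes the lemma self-contained, and the quantifier order in Definition~\ref{d:tunnel} (``for all $K$ there is $C_0$'') is exactly what you need in order to first fix $K$ large from the Chebyshev step and only then invoke tunneling on $|k|\le K\sigma$. One small caveat: as written, your convolution estimate uses $a_j=0$ for $|j|>m_0$, which is the BBLCN structure rather than a consequence of the tunneling hypothesis alone (Definition~\ref{d:tunnel} says nothing about the band-limitedness of $|\partial_z f|$). For a general tunneling domain you should replace the windowed Cauchy--Schwarz by Young's inequality, $\bigl\| |n|\hat u(n)\bigr\|_{\ell^2}=\sigma\|a*\hat u\|_{\ell^2}\le\sigma\|a\|_{\ell^1}\|\hat u\|_{\ell^2}$, which is available because $|\partial_z f|$ is smooth on $\partial D$ and hence $\|a\|_{\ell^1}<\infty$. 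With that adjustment the argument is complete.
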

\begin{proof}
First, note that by e.g.~\cite[Corollary 1.3]{GalkToth}, for $\sigma>3m$ there is $C>0$ so that
$$
\sum_{|k-m|\leq 2\sigma}|\hat{u}(k)|^2\geq\|\hat{u}\|_{\ell^2}^2(1-Ce^{-\sigma/C})).
$$
By Lemma~\ref{l:induct}
$$
\sum_{|k-m|\leq 2\sigma}|\hat{u}(k)|^2\leq \sum_{0\leq k-m\leq 2\sigma} C_0^{2k} A_m^2+\sum_{-2\sigma \leq k-m<0}C_0^{2|k|}A_m^2\leq 2 \frac{2C_0^{4\sigma+2}-1}{C^2_0-1}A_m^2
$$
In particular, 
$$
\frac{C^2_0-1}{2(2C_0^{4\sigma+2}-1)}\|\hat{u}\|_{\ell^2}^2(1-Ce^{-C\sigma})\leq A_m^2=\sum_{k=-m_0}^{m_0}|\hat{u}(k)|^2.
$$
Taking $\sigma_0$ large enough so that $Ce^{-C\sigma}\leq \frac{1}{2}$, finishes the proof.
\end{proof}

\begin{proof}[Proof of Theorem~\ref{thm:main}]
  In what follows we utilize the definitions~\eqref{e:delta_defs} for
  a given eigenvalue $\sigma_j = \sigma$, and, for that eigenvalue we
  denote $\hat{u}(k) = \hat{u}_{\sigma_j}(k) = \hat{u}_{\sigma}(k)$.
  Then, applying the relation
\begin{equation}
\label{e:L2norm}
\int_{B(0,\delta)} |\sum_k b_k r^{|k|}e^{ik\theta}|^2=  \sum_k |b_k|^2\frac{2\pi \delta^{2|k|+{2}}}{2|k|+{2}},
\end{equation}
which is valid for all sequences
$\{b_k\}_{k\in \mathbb{Z}}\subset \mathbb{C}$, to the right-hand
equation in~\eqref{e:delta_defs}, for $m\geq m_0$ we obtain
\begin{equation}
\label{e:L2bound}\|\tilde{u}_{\sigma,\delta,m}\|^2_{L^2}= \sum_{|k|\leq m}\frac{2\pi \delta^{2|k|+{2}}}{2k+{2}}|\hat{u}(k)|^2\geq 2\pi \frac{\delta^{2m_0+{2}}}{2m_0+{2}}\sum_{|k|\leq m_0}|\hat{u}(k)|^2=2\pi \frac{\delta^{2m_0+{2}}}{2m_0+{2}}A^2.
\end{equation}

To estimate the error in approximating $u_{\sigma,\delta}$ by $\tilde{u}_{\sigma,\delta,m}$, first note that 
\begin{align*}
\Big\|\sum_{|k|\geq 2\sigma} \hat{u}(k)r^{|k|}e^{ik\theta}\Big\|_{C^N(B(0,\delta))}&\leq \sum_{|k|\geq 2\sigma} |\hat{u}(k)|\cdot \|r^{|k|}e^{ik\theta}\|_{C^N(B(0,\delta))}\\
&\leq \Big(\sum_{|k|\geq 2\sigma} |\hat{u}(k)|^2\Big)^{\frac{1}{2}}\Big(\sum_{|k|\geq 2\sigma} \|r^{|k|}e^{ik\theta}\|^2_{C^N(B(0,\delta))}\Big)^{\frac{1}{2}}\\
&\leq \Big(\sum_{|k|\geq 2\sigma} |\hat{u}(k)|^2\Big)^{\frac{1}{2}}\Big(\sum_{|k|\geq 2\sigma} k^{2N}\delta^{2k-2N}\Big)^{\frac{1}{2}}\\
&\leq C_N\|\hat{u}\|_{\ell^2}\delta^{-N}{\sigma^N}\delta^{2\sigma}.
\end{align*}
Applying Lemma~\ref{l:lowerBound} with $m=0$, {and absorbing the $\sigma^N$ into the exponential factor} we then obtain
$$
\Big\|\sum_{|k|\geq 2\sigma} \hat{u}(k)r^{|k|}e^{ik\theta}\Big\|_{C^N(B(0,\delta))}\leq C_N\delta^{-N}\delta^{2\sigma}e^{C\sigma}A$$
where
$$
A:=\Big(\sum_{k=-m_0}^{m_0}|\hat{u}(k)|^2\Big)^{\frac{1}{2}},
$$

We can now estimate
\begin{align*}
\|\sum_{|k|\geq m} \hat{u}(k) r^{|k|}e^{ik\theta}\|_{C^N(B(0,\delta))}&\leq \sum_{m\leq |k| <  2\sigma} \|\hat{u}(k) r^{|k|}e^{ik\theta}\|_{C^N(B(0,\delta))} +\|\sum_{|k|\geq 2\sigma} \hat{u}(k) r^{|k|}e^{ik\theta}\|_{C^N(B(0,\delta))}\\
&\leq \sum_{m\leq |k|< 2\sigma} |\hat{u}(k)|\cdot \| r^{|k|}e^{ik\theta}\|_{C^N(B(0,\delta))} +C_N\delta^{-N}\delta^{2\sigma}e^{C\sigma}A\\
\end{align*}
Thus, using the definition of tunneling (Definition~\ref{d:tunnel}), we obtain
\begin{align*}
\|\sum_{|k|\geq m} \hat{u}(k) r^{|k|}e^{ik\theta}\|_{C^N(B(0,\delta))}
&\leq C_N\delta^{m-N}A\sum_{m\leq |k|< 2\sigma} C_0^{|k|}|k|^N\delta ^{|k|-m}+C_N\delta^{-N}\delta^{2\sigma}e^{C\sigma}A\\
&\leq C_N\delta^{m-N} A+C_N\delta^{-N}\delta^{2\sigma}e^{C\sigma}A
\end{align*}
provided that $\delta<\frac{1}{2}C_0^{-1}$.  
Therefore, using~\eqref{e:L2bound},
$$
\frac{\|\tilde{u}_{\sigma,\delta} -\tilde{u}_{\sigma,\delta,m}\|_{C^N(B(0,\delta))}}{\|\tilde{u}_{\sigma,\delta}\|_{L^2(B(0,\delta))}} \leq C_N\delta^{m-N-m_0-1}+C_N\delta^{2\sigma-N-m_0-1}e^{C\sigma}.
$$
Thus,
choosing $\delta>0$ such that $\delta < e^{-2C}$ and taking
$\sigma_0>N{+m_0+1}$ the claim follows.
\end{proof}

We can now present a proof of Theorem~\ref{t:nodal}.

\begin{proof}[Proof of Theorem~\ref{t:nodal}]
  From Corollary~\ref{c:dense} we know that there exists a tunneling
  domain $\Omega_1\subset\mathbb{C}$ satisfying~\eqref{e:pert_dom} for
  the given value $\varepsilon >0$. Let $\sigma_0$ be as in
  Theorem~\ref{thm:main}. Clearly, it suffices to prove the statement
  of the theorem for $\sigma> \sigma_0$, since for
  $\sigma\leq \sigma_0$ the statement follows from the fact that there
  are finitely many Steklov eigenvalues below $\sigma_0$ and that
  $\psi_{\sigma}$ cannot vanish in any open set. Therefore, we may and
  do assume $\sigma > \sigma_0$ along with the other assumptions in
  Theorem~\ref{thm:main}, so that, in particular,
  inequality~\eqref{e:remainder} holds. In what follows we write
\begin{equation}\label{Lpdelta-notation}
L^2(B(0,\delta)) = L^2_\delta\quad\mbox{and}\quad L^\infty(B(0,\delta)) = L^\infty_\delta
\end{equation}

  Fixing $m\geq m_0+2$, and letting $\tilde{u}_{\sigma,\delta}$ and
  $\tilde{u}_{\sigma,\delta,m}$ be given by~\eqref{e:delta_defs}
  (with $u_{\sigma}$ related to $\phi_\sigma$ via
  ~\eqref{e:eigfn_circle}) we note that
$$
\|\tilde{u}_{\sigma,\delta,m}\|_{L^\infty_\delta}\geq \frac{\|\tilde{u}_{\sigma,\delta,m}\|_{L^2_\delta}}{\sqrt{\pi} \delta}.
$$

It follows that there exists $x_0\in B(0,\delta)$ such that 
\begin{equation}
\label{e:bigness}
|\tilde{u}_{\sigma,\delta,m}(x_0)|\geq  \frac{\|\tilde{u}_{\sigma,\delta,m}\|_{L^2_\delta}}{\sqrt{\pi} \delta}.
\end{equation}
Now, since $\|\tilde{u}_{\sigma,\delta,m}\|_{C^1}\leq C_{m,\delta}\|\tilde{u}_{\sigma,\delta,m}\|_{L^2}$, it follows from~\eqref{e:bigness} that there is $r_{m,\delta}\in\mathbb{R}$, $0< r_{m,\delta}<\delta$  (in particular, independent of $\sigma$)
such that
\begin{equation}\label{e:strict_mv}|\tilde{u}_{\sigma,\delta,m}(x)|>\frac{\|\tilde{u}_{\sigma,\delta,m}\|_{L^2_\delta}}{2\sqrt{\pi} \delta},\qquad x\in B(x_0,r_{m,\delta}).
\end{equation}
But, since $m\geq m_0+2$, the estimate~\eqref{e:remainder} with $N=0$
yields
\begin{equation}\label{e:m-estimate}
|\tilde{u}_{\sigma,\delta,m}(x)|\leq |\tilde{u}_{\sigma,\delta}(x)| + |\tilde{u}_{\sigma,\delta}(x)-\tilde{u}_{\sigma,\delta,m}(x)| \leq C_0(\delta+e^{-c\sigma}) \|\tilde{u}_{\sigma,\delta} \|_{L^2_\delta} + |\tilde{u}_{\sigma,\delta}(x)|
\end{equation}
and 
\begin{equation}\label{e:m-L2-estimate}
 \|\tilde{u}_{\sigma,\delta} \|_{L^2_\delta}
\leq
 \|\tilde{u}_{\sigma,\delta,m}\|_{L^2_\delta} + \|\tilde{u}_{\sigma,\delta} - \tilde{u}_{\sigma,\delta,m}\|_{L^2_\delta} \leq \|\tilde{u}_{\sigma,\delta,m}\|_{L^2_\delta} + \sqrt{\pi}\delta C_0(\delta+e^{-c\sigma}) \|\tilde{u}_{\sigma,\delta} \|_{L^2_\delta}.
\end{equation}
(To establish the rightmost inequality in~\eqref{e:m-L2-estimate} the
relation
$\|\tilde{u}_{\sigma,\delta} -
\tilde{u}_{\sigma,\delta,m}\|_{L^2_\delta}\leq
\sqrt{\pi}\delta\|\tilde{u}_{\sigma,\delta} -
\tilde{u}_{\sigma,\delta,m}\|_{L^\infty_\delta}$ was used before the
inequality~\eqref{e:remainder} was applied.)
From~\eqref{e:m-L2-estimate} we obtain
\begin{equation}\label{e:m-lower-L2}
  \|\tilde{u}_{\sigma,\delta,m}\|_{L^2_\delta}\geq \|\tilde{u}_{\sigma,\delta} \|_{L^2_\delta} -
  \sqrt{\pi}\delta C_0(\delta+e^{-c\sigma}) \|\tilde{u}_{\sigma,\delta} \|_{L^2_\delta}.
\end{equation}

It follows from~\eqref{e:strict_mv}, \eqref{e:m-estimate}
and~\eqref{e:m-lower-L2} that
\begin{equation}\label{e:m-inf-estimate}
  C_0(\delta+e^{-c\sigma}) \|\tilde{u}_{\sigma,\delta} \|_{L^2_\delta} + |\tilde{u}_{\sigma,\delta}(x)|>\frac{\|\tilde{u}_{\sigma,\delta,m}\|_{L^2_\delta}}{2\sqrt{\pi} \delta} \geq \frac{\|\tilde{u}_{\sigma,\delta}\|_{L^2_\delta}}{2\sqrt{\pi} \delta}-
  \frac{C_0}{2}(\delta+e^{-c\sigma}) \|\tilde{u}_{\sigma,\delta} \|_{L^2_\delta},\quad x\in B(x_0,r_{m,\delta}),
\end{equation}
and, therefore
\begin{equation}\label{e:m-inf-estimate_2}
  |\tilde{u}_{\sigma,\delta}(x)|> \|\tilde{u}_{\sigma,\delta}\|_{L^2_\delta}\left(\frac{1}{2\sqrt{\pi} \delta}-
  \frac{3C_0}{2}(\delta+e^{-c\sigma}) \right),
\qquad x\in B(x_0,r_{m,\delta}).
\end{equation}
Taking $\delta_1$ sufficiently small and $\delta\leq\delta_1$ the
inequality
$$
\frac{3C_0}{2}(\delta+e^{-c\sigma_0})<\frac{1}{2\sqrt{\pi}\delta}
$$
holds, and it therefore follows that for a certain constant $D>0$ we have
$$
|u_{\sigma,\delta}(x)|>\frac{D\|\tilde{u}_{\sigma,\delta}\|_{L^2_\delta}}{\delta}\quad\mbox{for}\quad x\in B(x_0,r_{m,\delta})
$$
provided $\delta<\delta_1$. In particular,
$$
|\phi_{\sigma}(x)|>0,\qquad x\in f(B(x_0,r_{m,\delta})).
$$
Since the derivative of $f$ never vanishes, for $\delta <\delta_1$ and
for a certain $E>0$ there is a ball $\mc{B}$ of radius $Er_{m,\delta}$
such that $\phi_{\sigma}$ does not vanish on $\mc{B}$. The proof is
now complete.

\end{proof}

	\begin{figure}[!h]		
  \centering		
  \includegraphics[width=.45\textwidth]{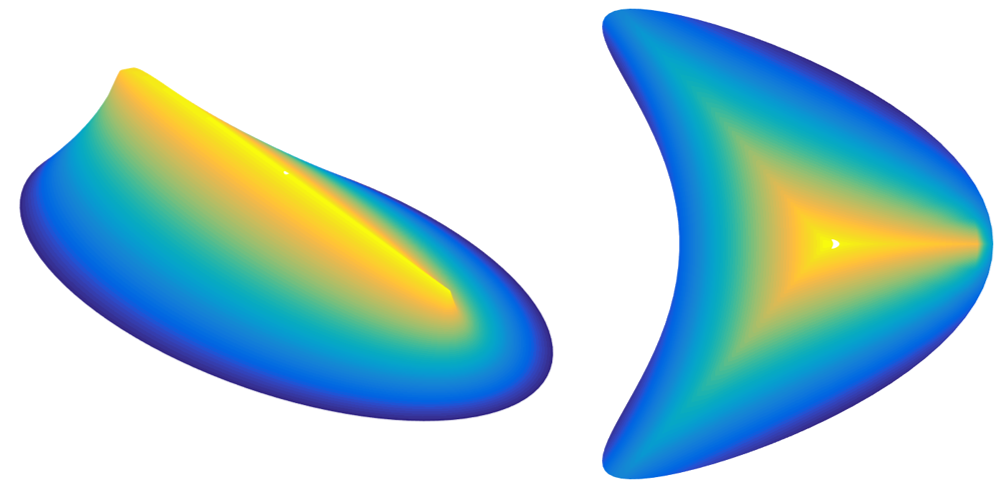}		
  \caption{The function $\lambda$ for an ellipse (left) and a kite-shaped domain (right).\label{sigmas}}		
\end{figure}

\section{Numerical Formulation}
\label{s:numerical}

\subsection{Integral representation}
Let $\Omega\subset\mathbb{R}^2$ denote a domain with, say, a
$C^2$ boundary, and let
\[
  S[\phi](x) := \int \limits_{\partial \Omega} G(x,y) \phi(y) dS(y),\quad x\in\mathbb{R}^2 ,\quad G(x,y) = -\frac{1}{2\pi}\log|x-y|,
\]
denote the Single Layer Potential (SLP) for a given density
$\phi:\partial \Omega\to \mathbb{R}$ in a certain Banach space $H$ of
functions. Both Sobolev and continuous spaces $H$ of functions lead to
well developed Fredholm theories in this
context~\cite{Kress2010,mclean2000strongly}. It is useful to recall
that, as shown e.g. in the aforementioned references, the limiting
values of the potential $S$ and its normal derivative on
$\partial\Omega$ can be expressed in terms of well known ``jump
conditions'' that involve the single and double layer boundary
integral operators
\[
  \mathcal{S}[\phi](x):= \int_{\partial \Omega} G(x,y)\phi(y) ds(y) \quad
  \quad\mbox{and}\quad \mathcal{T}[\phi](x):= \int_{\partial \Omega}
  \frac{\partial G(x,y)}{\partial {\nu}(x)}\phi(y)ds(y),\quad
  x\in\partial\Omega,
\]
respectively. 

In view of the jump conditions for the SLP~\cite{Kress2010}, use of
the representation
\begin{equation}\label{e:S}
  u(x) = S[\phi](x),\quad x\in\Omega,
\end{equation}
for the eigenfunction $u$, the Steklov boundary condition in
equation~\eqref{e:steklovM} gives rise to the generalized eigenvalue
problem
\begin{equation}\label{e:gen_eig_sing}
(\frac{1}{2}\mathcal{I}+\mathcal{T})\left[ \phi\right]= {\sigma}\, \mathcal{S}[\phi] \quad\mbox{for}\quad x \in \partial \Omega.
\end{equation}
Unfortunately, however, the single layer operator $\mathcal{S}$ on the
right side of this equation is not always invertible. In order to
avoid singular right-hand sides and the associated potential
sensitivity to round-off errors, in what follows we utilize the Kress
potential
\begin{equation}\label{e:S0}
  u(x) = S_0[\phi](x) = \int \limits_{\partial \Omega} G(x,y)
  \left(\phi(y)-\overline{\phi}\right) dS(y) + \overline{\phi},\quad x \in\Omega
\end{equation}
(where $\overline{\phi}$ denotes the average of $\phi$ over
$\partial\Omega$), which leads to the modified eigenvalue
equation~\cite{akhmetgaliyev2016thesis}
\begin{equation}\label{e:gen_eig_reg}
(\frac{1}{2}\mathcal{I}+\mathcal{T})\left[ \phi - \overline{\phi}\right]= {\sigma}\left(\mathcal{S}[\phi-\overline{\phi}]+ \overline{\phi} \right)\quad\mbox{for}\quad x \in \partial \Omega.
\end{equation}
The right-hand operator in this equation is
invertible~\cite[Thm. 7.41]{Kress2010}, as desired.  For either
formulation, the evaluation of a given eigenfunction $u$ requires
evaluation of the SLP, in accordance with either~\eqref{e:S}
or~\eqref{e:S0}, for the solution $\phi$ of the corresponding
generalized eigenvalue problem~\eqref{e:gen_eig_sing}
or~\eqref{e:gen_eig_reg}, respectively, at all required points
$x\in \Omega$.

\begin{rem}
{Note that for a given harmonic function $u$ in $\Omega$, $\phi$ in~\eqref{e:gen_eig_sing} and that in~\eqref{e:gen_eig_reg} are \emph{not} the same. }
\end{rem}

\subsection{Fourier expansion and exponential decay}

In terms of a given $2\pi$-periodic parametrization $C(t)$ of
$\partial \Omega$, the Steklov eigenfunction $u$ corresponding to a
given solution $(\phi,{\sigma})$ of the regularized eigenvalue
problem~\eqref{e:gen_eig_reg}, which is given by the single layer
expression~\eqref{e:S0}, can be expressed, for a given point
$x = (x_1,x_2)\in\Omega$,
\begin{equation}\label{u_param}
  u(x_1,x_2) = 
  \overline{\phi}{+}\frac{1}{{4}\pi} \int \limits_0^{2\pi} 
  \log \left[ \left( x_1 - C_1(t) \right)^2 + \left( x_2 -C_2(t) \right)^2 \right] \left[\phi\left(C(t)\right) - \overline{\phi} \right] \left| \dot{C}(t) \right| dt,
\end{equation}
where $C(t)=(C_1(t),C_2(t))$ and where $\overline{\phi}$ denotes the
average of $\phi$ over the curve $\partial\Omega$.  Unfortunately, a
direct use of this expression does not capture important elements in
the eigenfunction within $\Omega$, such as the nodal sets, since, for
analytic domains, the eigenfunctions decay exponentially fast within
$\Omega$ as the frequency increases~\cite{PoShTo,GalkToth}.  In
regions where the actual values of the eigenfunction may be
significantly below machine precision the expression~\eqref{u_param}
must be inaccurate: this expression can only achieve the exponentially
small values via the cancellations that occur as the the solution
$\phi$ becomes more and more oscillatory. But such cancellations
cannot take place numerically below the level of machine precision. In
order to capture the decay explicitly within the numerical algorithm
we proceed in a manner related to the construction used
in~\cite{PoShTo}.

To accurately obtain the exponentially decaying values of the Steklov
eigenfunction we proceed as follows. We first consider the Fourier
expansion
\begin{equation}\label{e:four_exp}
\left[\phi\left(C(t)\right) - \overline{\phi}\, \right]\left|
  \dot{C}(t) \right| = \sum_{\substack{n\in \mathbb{Z}\\ n\ne 0}} A_n
e^{int}.
\end{equation}
of the product
$\left[\phi(C(t))-\overline{\phi}\,\right]\left| \dot{C}(t) \right|$;
note that, as is easily checked, the $n=0$ term in the Fourier
expansion~\eqref{e:four_exp} is indeed equal to zero.  Inserting this
expansion in~\eqref{u_param} we obtain
\begin{equation*}
  u(x_1,x_2) = \overline{\phi}+\sum_{\substack{n\in \mathbb{Z}\\ n\ne 0}} A_n 
  B_n^0(x_1,x_2),\quad \mbox{where}
\end{equation*}
\[
B_n^0(x_1,x_2) = 
-\frac{1}{{4}\pi} \int \limits_0^{2\pi} 
\log \left[ \left( x_1 - C_1(t) \right)^2 + \left( x_2 -C_2(t) \right)^2 \right] 
e^{int}dt.
\]
Then, assuming an analytic boundary, as is relevant in the context of
this paper, and further assuming, for simplicity, that $C(t)$ is in
fact an entire function of $t$ (as are, for example, all
parametrizations $C(t)$ given by vector Fourier series containing
finitely many terms), we introduce, for $x=(x_1,x_2)\in\Omega$, the
quantities
\[
  {\lambda}(x) = \sup \left \{ s\geq 0 : x\neq C(t+ ir)\mbox{ for all
      $r$ with $|r|\leq s$ and for all $t \in [0,2\pi]$}\right\}
\]
and
\begin{equation}
\label{e:log-comp}
  B_n(x_1,x_2,s) = -\frac{1}{{4}\pi} \int \limits_0^{2\pi} \log \left[
    \left( x_1 - C_1(t+is\sgn(ns) ) \right)^2 + \left( x_2 -C_2(t+is\sgn(ns) ) \right)^2
  \right] e^{int}dt.
\end{equation}
Using Cauchy's Theorem for $x=(x_1,x_2)\in\Omega$ and any
$s\in\mathbb{R}$ satisfying $|s| \leq {\lambda}(x)$, we obtain
\begin{equation}\label{e:cauchy_coeffs}
B_n^0(x_1,x_2)=e^{-|ns|} B_n(x_1,x_2,s),
\end{equation}
and, thus, letting $s = \alpha{\lambda}(x)$ for any $\alpha\in\mathbb{R}$
satisfying $|\alpha|\leq 1$, the eigenfunction $u$ is given by
\begin{equation}\label{eig_sigma}
  u(x_1,x_2) = \overline{\phi}+\sum_{\substack{n\in \mathbb{Z}\\ n\ne
      0}} A_n e^{-|n\alpha| {\lambda}(x_1,x_2)} B_n(x_1,x_2,\alpha {\lambda}(x_1,x_2))
\end{equation}

\begin{lemma} 
\label{l:Bn}
There is $C>0$ such that for all $n>0$, 
$$
|B_n(x_1,x_2,\lambda(x_1,x_2))|\leq \frac{C}{1+|n|}.
$$
Moreover, there is $c>0$ and a sequence $\{n_k\}_{k=1}^\infty$ with $|n_k|\to \infty$ such that
\begin{equation}
\label{e:asympForm}
 |B_{n_k} ( x_1,x_2,{\lambda}(x_1,x_2) ) |\geq  \frac{c}{n_k}.
\end{equation}
\end{lemma}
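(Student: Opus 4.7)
The plan is to derive an asymptotic expansion of $B_n(x,\lambda(x))$ for large $|n|$ by isolating the logarithmic singularities of the integrand on the shifted contour. Identifying $\mathbb{R}^2\cong\mathbb{C}$, set $\zeta=x_1+ix_2$ and introduce the entire continuations $\tilde C(z):=C_1(z)+iC_2(z)$ and $\bar{\tilde C}(z):=C_1(z)-iC_2(z)$. These satisfy
$$
(x_1-C_1(z))^2+(x_2-C_2(z))^2 \;=\; (\zeta-\tilde C(z))(\bar\zeta-\bar{\tilde C}(z)),
$$
and by the definition of $\lambda(x)$ neither factor vanishes for $|\Im z|<\lambda(x)$. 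Focus on $n>0$ (the case $n<0$ is symmetric via the bottom of the strip), so that the relevant contour is $\Im z=\lambda(x)$; on this contour the first factor vanishes at finitely many real points $t_1,\dots,t_M\in[0,2\pi)$. Since $x\in\Omega$, both factors cannot vanish simultaneously (that would place $x$ on $\partial\Omega$), and each zero of the first factor is generically simple, so the integrand splits near $t_j$ as $[\log(t-t_j)+h_j(t)]\,e^{int}$ with $h_j$ smooth.

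Introducing a smooth partition of unity $\{\chi_j\}\cup\{\chi_\infty\}$ adapted to the $t_j$, the smooth remainder (with cutoff $\chi_\infty$) decays as $O(|n|^{-N})$ for every $N$ by repeated integration by parts. Each local piece reduces, after one integration by parts, to a principal-value integral with a simple pole,
$$
\int \chi_j(t)\log(t-t_j)\,e^{int}\,dt \;=\; -\frac{1}{in}\,\mathrm{P.V.}\!\int\frac{\chi_j(t)}{t-t_j}\,e^{int}\,dt \;+\; O(n^{-2}),
$$
which by contour deformation into the appropriate half-plane is asymptotic to $i\pi\,\sgn(n)\,e^{int_j}+O(1/n)$. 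Summing these contributions, together with the global prefactor $-1/(4\pi)$, yields
$$
B_n(x,\lambda(x)) \;=\; \frac{1}{n}\sum_{j=1}^M \alpha_j\,e^{int_j} \;+\; O(n^{-2}), \qquad |n|\to\infty,
$$
with explicit nonzero constants $\alpha_j$. The upper bound $|B_n|\le C/(1+|n|)$ is then immediate; the bounded-$|n|$ case follows from integrability of the logarithm on $[0,2\pi]$.

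For the lower bound, observe that $f(n):=\sum_{j=1}^M \alpha_j e^{int_j}$ is a finite trigonometric sum, hence a Bohr almost-periodic sequence in $n$. Since at least one $\alpha_j\neq 0$ and the $t_j$ are distinct, Parseval's identity gives
$$
\lim_{N\to\infty}\frac{1}{N}\sum_{n=1}^N|f(n)|^2 \;=\; \sum_{j=1}^M|\alpha_j|^2 \;>\; 0,
$$
so $|f(n)|$ exceeds $c_0:=\tfrac12(\sum_j|\alpha_j|^2)^{1/2}$ for infinitely many $n$, producing a subsequence $\{n_k\}$ along which
$$
|B_{n_k}(x,\lambda(x))| \;\ge\; \frac{c_0}{n_k}-\frac{C}{n_k^2}\;\ge\;\frac{c_0}{2n_k}
$$
for $k$ large, which is~\eqref{e:asympForm}. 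The main technical difficulty lies in making the asymptotic expansion rigorous in the various degenerate geometries: higher-order zeros of $\tilde C-\zeta$ at $t_j+i\lambda(x)$ would alter the decay rate, coalescing $t_j$'s could produce cancellations, and a singular parameter landing at the endpoint $0\equiv 2\pi$ introduces extra boundary terms. Each such case requires a tailored local analysis, but the resulting asymptotics either improve both bounds or can be reduced to the generic case by an infinitesimal perturbation of $x$ inside $\Omega$.
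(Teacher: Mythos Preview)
Your approach mirrors the paper's: localize near the zeros $t_j$ of $h(\cdot+i\lambda)$ with a partition of unity, extract the leading $1/|n|$ term from each logarithmic singularity, and then invoke Ces\`aro/Parseval averaging of $\bigl|\sum_j \alpha_j e^{int_j}\bigr|^2$ to produce the lower-bound subsequence. The local computation differs only cosmetically---you integrate by parts to a principal-value integral and deform, while the paper splits $\log h=\log|h|+i\,\Im\log h$, uses the explicit asymptotic $\int_{-1}^{1}\log|t|\,e^{ixt}\,dt=-\pi/|x|+O(x^{-2})$ for the real part, and computes the $k\pi$ jump of the imaginary part across $t_j$ by a residue argument to handle the second piece. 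The averaging step for the lower bound is identical in both.

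Two points deserve tightening. First, the paper treats zeros of $h(\cdot+i\lambda)$ of arbitrary order $k$ uniformly (the order simply multiplies the leading coefficient, giving $-2\pi k\,e^{int_j}/|n|$), so there is no need to defer this to a ``degenerate geometry'' discussion; in particular your suggestion of resolving such cases by ``an infinitesimal perturbation of $x$'' is unavailable, since the lemma is stated for fixed $(x_1,x_2)$. Second, your justification that both factors $\zeta-\tilde C$ and $\bar\zeta-\bar{\tilde C}$ cannot vanish simultaneously is not the right reasoning: simultaneous vanishing at $t_0+i\lambda$ forces $C_1(t_0+i\lambda)=x_1$ and $C_2(t_0+i\lambda)=x_2$ as complex numbers, which is a codimension-two coincidence in the $z$-plane rather than the condition $x\in\partial\Omega$. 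This is harmless for the argument, however, since a double zero merely doubles the leading coefficient.
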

\noindent 
A proof of Lemma~\ref{l:Bn} is given in Appendix~\ref{a:lemma}. It
follows from Lemma~\ref{l:Bn} that equation~\eqref{e:cauchy_coeffs}
optimally captures the exponential decay of the $B_n$ terms as
$\sigma\to\infty$. Note that this setup does not capture the
exponential decay of the coefficients $A_n$ below machine precision
away from $|n|\sim\sigma$, and, therefore, the accuracy of the
resulting interior eigenfunction reconstructions does not exceed that
accuracy level.  But the function $\lambda(x_1,x_2)$ does capture
the exponential decay and the geometrical character of the
eigenfunction as long as the (spatially constant) coefficients $A_n$
for low $n$ remain above machine precision. 

\begin{figure}[!h]		
  \centering		 
  \includegraphics[width=.65\textwidth]{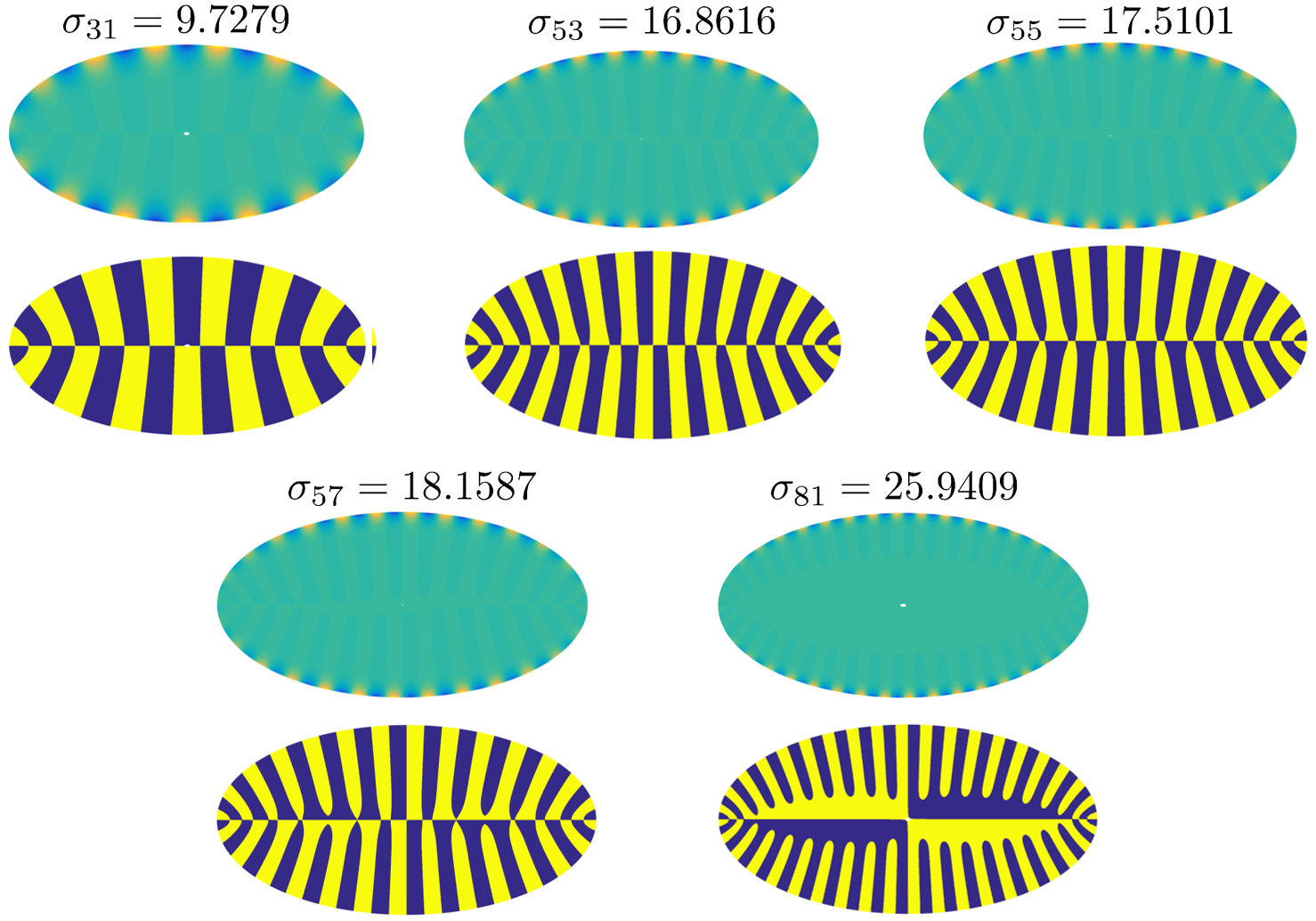}		 
  \caption{Density-plots (first and third rows) and fixed-sign sets
    (second and forth rows) for Steklov eigenfunctions over the
    elliptical domain~\eqref{ellipt_domain}. The eigenfunctions of
    orders $57$ and $81$ demonstrate the onset of the asymptotic
    character. In particular, regions of asymptotically fixed size open
    up. 
     \label{ellipses_31-81}}
\end{figure}
\begin{figure}[!h]
  \centering
  \includegraphics[width=.65\textwidth]{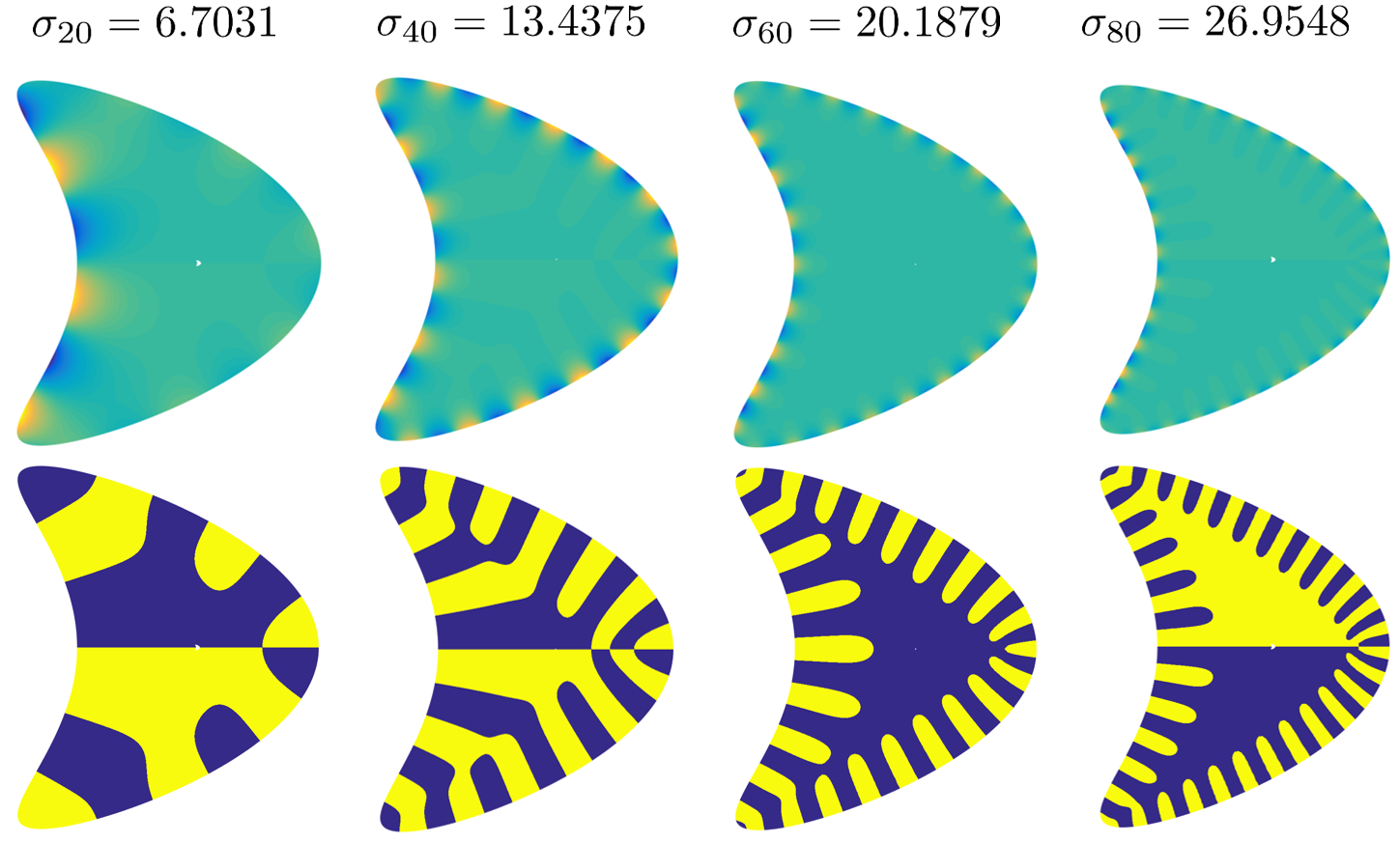}
  \caption{Density-plots (first row) and fixed-sign sets (second
    rows) for Steklov eigenfunctions over the kite-shaped
    domain~\eqref{kite_domain}. \label{kites_20-60}
  }
\end{figure}
For general curves $C(t)$ no closed form expressions exist for the
function ${\lambda}(x)$, and a numerical algorithm must be used for the
evaluation of this quantity, as part of a numerical implementation of
the eigenfunction expression~\eqref{eig_sigma}. In our implementation
the function ${\lambda}$ was evaluated via an application of Newton's
method to the nonlinear equation
\[
h(z)=(x_1-C_1(z))^2+(x_2-C_2(z))^2=0.
\]
Explicit expressions can be obtained for circles and ellipses,
however:
\begin{enumerate}
\item For a circle of radius 1: 
\[{\lambda}(x_1,x_2) = -\log \left( \sqrt{x_1^2 + x_2^2}\right) .\]
\item For an ellipse of semiaxes $a>b$: 
  \begin{equation}\label{sigma_ell}
  {\lambda}(x_1,x_2) = \mathrm{arcosh}\left( \frac{a}{\sqrt{a^2-b^2} }\right) - \mathrm{Re}\left\{ \mathrm{arcosh} \left( \frac{x_1+ix_2}{\sqrt{a^2-b^2}}\right) \right\}.
\end{equation}
\end{enumerate}
The derivation of the expression~\eqref{sigma_ell} is outlined in
Appendix~\ref{app}.

\subsection{Exponential decay and verification of Cauchy's
  theorem\label{verification}}
Tables~\ref{table:ellipse} and~\ref{table:kite} demonstrate the
validity of equation~\eqref{e:cauchy_coeffs} (since in both cases the
results in the second and third columns closely agree with each other
for $n\leq 50$), as well as the exponential decay of the exact
coefficients $B_n^0$---as born by the results in the third column of
these tables.  The disagreement observed for $n>50$ is caused by the
lack of precision of the results in the second column beyond machine
accuracy, a problem that is eliminated in the third column via an
application of the relation~\eqref{e:cauchy_coeffs}.
\begin{table}[h!]
\begin{center} 
  \begin{tabular}{|c|c|c|c|c|}
    \hline
    $n$   &  $|B_n^0(x_1,x_2)|$  & $|e^{-n0.8{\lambda}}B_n(x_1,x_2,0.8{\lambda})|$ &  Absolute $B_n^0$ error & Relative $B_n^0$  error\\ 
    \hline
    1   & 5.62e-03 & 5.62e-03 & 3.82e-16 & 6.79e-14 \\
    10  & 2.29e-06 & 2.29e-06 & 4.39e-17 & 1.91e-11 \\
    50  & 6.40e-16 & 6.57e-16 & 3.85e-17 & 5.86e-02 \\
    100 & 3.05e-17 & 1.30e-28 & 3.05e-17 & 2.35e+11 \\
    150 & 1.33e-16 & 5.95e-41 & 1.33e-16 & 2.23e+24 \\
    200 & 2.65e-16 & 6.58e-53 & 2.65e-16 & 4.02e+36 \\
    \hline
  \end{tabular}
\end{center}
\caption{Verification of the Cauchy-theorem-based identity~\eqref{e:cauchy_coeffs}  for the domain $\Omega$ bounded by the elliptical curve~\eqref{ellipt_domain} with $a=2$ and $b=1$. \label{table:ellipse}}
\end{table}
\begin{table}
\begin{center}
  \begin{tabular}{|c|c|c|c|c|}
    \hline
    $n$   &  $|B_n^0(x_1,x_2)|$  & $|e^{-n0.8{\lambda}}B_n(x_1,x_2,0.8{\lambda})|$ &  Absolute $B_n^0$ error & Relative $B_n^0$  error \\ 
    \hline
    1   & 5.83e-03 &  5.83e-03  & 4.25e-16 & 7.29e-14 \\
    10  & 5.97e-06 &  5.97e-06  & 7.18e-18 & 1.20e-12 \\
    50  & 2.33e-14 &  2.34e-14  & 3.32e-17 & 1.42e-03 \\
    100 & 1.14e-16 &  3.05e-25  & 1.14e-16 & 3.75e+08 \\
    150 & 1.27e-16 &  6.78e-36  & 1.27e-16 & 1.88e+19 \\
    200 & 2.42e-16 &  3.05e-45  & 2.42e-16 & 7.93e+28  \\
    \hline
  \end{tabular}
\end{center}
\caption{Same as Figure~\eqref{table:ellipse} but for the kite-shaped domain $\Omega$ bounded by the curve~\eqref{kite_domain}.\label{table:kite}}
\end{table}

\section{Numerical Results}\label{s:numer_res}
Figures~\ref{ellipses_31-81} and~\ref{kites_20-60} present density
plots and fixed-sign sets for Steklov eigenfunctions over domains
bounded by the elliptical and kite-shaped curves parametrized by the
vector functions
\begin{equation}\label{ellipt_domain}
C(t) = ((a\cos(t),b\sin(t))\quad (0\leq t <2\pi)
\end{equation}
with $a=2$ and $b=1$, and
\begin{equation}\label{kite_domain}
C(t) = (\cos(t) + 0.65  \cos (2t) - 0.65,1.5  \sin(t))\quad (0\leq t <2\pi),
\end{equation}
respectively. These figures demonstrate, in particular, domain-opening
and non-density of nodal sets as discussed in Remark~\ref{disclaimer}.
\\ \ \\

\noindent {\sc Acknowledgements.} Thanks to Agustin Fernandez Lado for
writing the code numerical Steklov-eigenfunction solver and for
providing the derivation presented in Appendix~\ref{app}. Thanks also to Jared Wunsch for suggesting part of the proof of Lemma~\ref{l:Bn} The authors
are grateful to the American Institute of Mathematics where this
research began. J.G. is grateful to the National Science Foundation
for support under the Mathematical Sciences Postdoctoral Research
Fellowship DMS-1502661 and under DMS-1900434. O.B.  gratefully acknowledges support by NSF,
AFOSR and DARPA through contracts DMS-1411876, FA9550-15-1-0043 and
HR00111720035, and the NSSEFF Vannevar Bush Fellowship under contract
number N00014-16-1-2808. \\ \ \\ 

\par

\appendix
\section{Function ${\lambda}(x)$ For an ellipse of semiaxes
  $a>b$\label{app}}
Let $\gamma=\sqrt{a^2-b^2}$ and $\mu_0 = \mathrm{arcosh}(a/\gamma)$. Using
elliptical coordinates with foci $(\pm \gamma,0)$ to represent the point
$x = (x_1,x_2)$, so that $x_1=\gamma\cosh(\mu)\cos(\tau)$ and
$x_2 =\gamma\sinh(\mu)\sin(\tau)$, and letting the boundary of the ellipse
be given by $C_1(t)=\gamma\cosh(\mu_0)\cos(t)$,
$C_2(t)=\gamma\sinh(\mu_0)\sin(t)$, in view of the relations
$x_1+ix_2 = \gamma\cosh(\mu+i\tau)$ and $C_1(t)+iC_2(t) = \gamma\cosh(\mu_0+it)$
we obtain
\begin{equation}
\begin{split}
(x_1-C_1(t+is))^2+&(x_2-C_2(t+is))^2 = \gamma^2 \left|\cosh ( \mu + i \tau )-\cosh(\mu_0+i(t+is)) \right|^2 \\
&= 4 \gamma^2 \left| \sinh \frac{\mu+\mu_0+i(\tau+(t+is))}{2} \right|^2 \left| \sinh \frac{\mu-\mu_0+i(\tau-(t+is))}{2} \right|^2.
\end{split}
\end{equation}
It follows that the left-hand side of this equation vanishes for some
value of $t$ if and only if either $s = (\mu_0 - \mu)$ or
$s = (\mu_0 + \mu)$. Thus, ${\lambda}(x)$ equals the smallest of these
two positive numbers, namely ${\lambda}(x) = (\mu_0 - \mu)$, which is
equivalent to the desired relation~\eqref{sigma_ell}.

\section{Proof of Lemma~\ref{l:Bn}}
\label{a:lemma}
First, let 
$$
h(z,x_1,x_2):=(x_1-C_1(z))^2+(x_2-C_2(z))^2. 
$$
Then, for $|\Im z|<\lambda(x_1,x_2)$, the expression
$$
\log h(z):=\int_0^z\frac{h'(s)}{h(s)}ds+\log h(0)
$$ 
defines the principal branch of $\log h(z)$---which is, then, an
analytic function in the strip $|\Im z|<\lambda$. On
$\pm \Im z=\lambda$, we define
$$
\log h(z):=\lim_{\e \to 0^+}h(z\mp i\e).
$$

\begin{lemma}
\label{l:jump}
Let $h(z)$ denote an analytic function defined on an open neighborhood
of the set $\{z:|\Im z|\leq \lambda\}$ which does not vanish for
$|\Im z|< \lambda$, but which vanishes to order $k$ at
$z_0=t_0+ i\lambda$.  Then,
$$
\lim_{\e_1\to 0^+}\Im\log h(z_0+\e_1)-\lim_{\e_2 \to 0^+}\Im h(z_0-\e_2)= k\pi .
$$
Similarly, if $h$ vanishes to order $k$ at $z_0=t_0-i\lambda$,
$$
\lim_{\e_1\to 0^+}\Im\log h(z_0+\e_1)-\lim_{\e_2 \to 0^+}\Im h(z_0-\e_2)=- k\pi .
$$
\end{lemma}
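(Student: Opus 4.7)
The plan is to localize the problem near the zero $z_0$, factor out the vanishing, and then track the imaginary part of a chosen branch of $\log(z-z_0)$ as we approach $z_0$ from the two sides along the boundary line $\Im z = \lambda$, using only limits taken from within the strip $|\Im z|<\lambda$ where the extended function is unambiguously defined.

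First, since $h$ is analytic on an open neighborhood of $\{|\Im z|\le \lambda\}$ and vanishes to order $k$ at $z_0 = t_0+i\lambda$, a Taylor expansion about $z_0$ gives a factorization $h(z) = (z-z_0)^k g(z)$ on a disc $B(z_0,r)$, where $g$ is analytic and nowhere vanishing there. The strip $\{|\Im z|<\lambda\}$ is simply connected and $h$ is zero-free on it, so the integral definition produces a single-valued analytic branch of $\log h$ on the whole strip. Restricting to the open half-disc $U := B(z_0,r)\cap\{\Im z<\lambda\}$ (which is simply connected and does not contain $z_0$), we may choose the holomorphic branch of $\log(z-z_0)$ characterized by $\arg(z-z_0)\in(-\pi,0)$ on $U$, and a holomorphic branch of $\log g$ on $B(z_0,r)$. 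Both branches are continuous up to the boundary segment through $z_0$, minus the point $z_0$ itself. By connectedness and continuity, there is an integer constant $N$ such that
\begin{equation*}
\log h(z) \;=\; k\log(z-z_0) + \log g(z) + 2\pi i N, \qquad z\in U.
\end{equation*}

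Next I would compute the two boundary limits. For the right-hand limit, write $z = z_0+\e_1 - i\eta$ with $\eta>0$ small; then $z-z_0 = \e_1 - i\eta$ lies in the lower half-plane near the positive real axis, so $\arg(z-z_0)\to 0$ as $\eta\downarrow 0$, and consequently $\Im\log h(z_0+\e_1) = \Im\log g(z_0+\e_1) + 2\pi N$. For the left-hand limit, write $z = z_0-\e_2 - i\eta$ with $\eta>0$; now $z-z_0 = -\e_2 - i\eta$ sits in the lower half-plane just below the negative real axis, so with our chosen branch $\arg(z-z_0)\to -\pi$, giving $\Im\log h(z_0-\e_2) = -k\pi + \Im\log g(z_0-\e_2) + 2\pi N$. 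Taking $\e_1,\e_2\downarrow 0$, continuity of $\log g$ at $z_0$ makes the $\log g$ contributions cancel, and the $2\pi N$ constants cancel identically. The result is
\begin{equation*}
\lim_{\e_1\to 0^+}\Im\log h(z_0+\e_1) - \lim_{\e_2\to 0^+}\Im\log h(z_0-\e_2) \;=\; k\pi,
\end{equation*}
as claimed. For the second statement with $z_0 = t_0 - i\lambda$, the extension is now made from the upper half-disc $U' := B(z_0,r)\cap\{\Im z > -\lambda\}$, on which the appropriate single-valued branch has $\arg(z-z_0)\in(0,\pi)$. The same computation gives $\arg(z-z_0)\to 0$ on the right and $\arg(z-z_0)\to \pi$ on the left, flipping the sign to yield $-k\pi$.

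The only delicate point, and the one I would emphasize, is checking that the global branch of $\log h$ produced by the integral definition really does agree with the local expression $k\log(z-z_0)+\log g(z)$ up to an additive constant that is the same on both sides of $z_0$; this is exactly why working in the simply connected half-disc $U$ (rather than a full punctured disc around $z_0$) is essential. Once that identification is in place, the computation reduces to evaluating $\arg(\pm\e - i\eta)$ as $\eta\downarrow 0$, which is elementary.
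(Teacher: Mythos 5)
Your proof is correct. It establishes the same local fact as the paper but by a different route: the paper writes $\log h(z_0+\e_1)-\log h(z_0-\e_2)=\int_\Gamma h'(z)/h(z)\,dz$ over a small semicircular contour lying below $z_0$ in the strip, and evaluates it using the expansion $h'/h=\frac{k}{z-z_0}\left(1+O(|z-z_0|)\right)$, so that the half-turn around the zero contributes $k\pi i$ plus real terms $\log\e_1-\log\e_2$ that drop out upon taking imaginary parts. You instead factor $h(z)=(z-z_0)^k g(z)$ with $g$ zero-free on a small disc, identify the strip branch of $\log h$ with $k\log(z-z_0)+\log g(z)+2\pi i N$ on the simply connected half-disc $U$, and read the jump off the boundary limits of $\arg(z-z_0)\in(-\pi,0)$. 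Both arguments are elementary and rigorous; yours makes the branch bookkeeping (the constant $2\pi i N$ and why it cancels) fully explicit and avoids the contour integral, while the paper's version also records the real part of the jump, which is immaterial for the statement. Your emphasis on working in the half-disc rather than a punctured disc --- so that the comparison of branches is between single-valued functions on a simply connected set --- is precisely the point that makes the identification legitimate, and your sign check in the case $z_0=t_0-i\lambda$ (approaching from above, $\arg\in(0,\pi)$) correctly produces $-k\pi$.
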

\begin{proof}
Note that for $\e>0$ small enough $\{h(z)=0\}\cap \{|z-z_0|<\e\}=z_0$. Therefore
$$
\log h(z_0+\e_1)-\log h(z_0-\e_2)=\int_{\Gamma}\frac{h'(z)}{h(z)}dz
$$
where $\Gamma$ is any contour starting at $z_0-\e_2$, ending at $z_0+\e_1$, and lying in
$$
\{\Im z\leq \lambda\}\cap B(z_0,\e).
$$
In particular, let 
$$
\Gamma_1=\{ z_0+\e_2e^{it}\mid t\in [\pi,2\pi]\},\qquad \Gamma_2:=\{ z_0 +(1-t)\e_2+t\e_1\}
$$
and $\Gamma=\Gamma_1\cup \Gamma_2$. Then, since 
$$
\frac{h'(z)}{h(z)}= \frac{k}{z-z_0}(1+O(|z-z_0|)), 
$$
$$
\log h(z_0+\e_1)-\log h(z_0-\e_2)= k\pi i +\log \e_1 - \log \e_2+O(|\e_1-\e_2|)+O(\e_2)
$$
Letting $\e_1$ and $\e_2$ tend to zero completes the proof for the
case $z_0=t_0+i\lambda$. The proof for $z_0=t_0-i\lambda$ follows by
substituting $z$ by $-z$.
\end{proof}

\begin{lemma}
\label{l:integrate}
Let $h(z,x_1,x_2)$ denote an analytic function on
$|\Im z|\leq \lambda$ which vanishes to order $k$ at
$z_0=t_0+ i\lambda$.  Then for $\chi\in C_c^\infty(S^1)$ supported in
a sufficiently small neighborhood of $t_0$, with $\chi\equiv 1$ near
$t_0$, we have
$$
\int_{S^1} \chi(t)\log h(t+i\lambda)e^{int}dt=-\frac{2\pi
  k}{|n|}e^{int_0} +O(n^{-2})\quad \mbox{for $n>0$}.
$$
Similarly if $ h$ vanishes to order $k$ at $z_0=t_0-i\lambda$, we have
$$
\int_{S^1} \chi(t)\log h(t-i\lambda)e^{int}dt=-\frac{2\pi
  k}{|n|}e^{int_0} +O(n^{-2})\quad \mbox{for $n<0$}.
$$
\end{lemma}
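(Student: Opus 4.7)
My plan is to prove the estimate by a single integration by parts combined with the Sokhotski--Plemelj distributional boundary-value formula. I will describe the case $z_0 = t_0 + i\lambda$ with $n > 0$; the case $z_0 = t_0 - i\lambda$ with $n < 0$ is analogous after the obvious sign flips.

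First I would write $e^{int} = (in)^{-1}\partial_t e^{int}$ and integrate by parts on the circle to obtain
$$
\int_{S^1} \chi(t) \log h(t+i\lambda) e^{int}\,dt = -\frac{1}{in}\int_{S^1}\chi(t)\frac{h'(t+i\lambda)}{h(t+i\lambda)}e^{int}\,dt - \frac{1}{in}\int_{S^1}\chi'(t)\log h(t+i\lambda)e^{int}\,dt.
$$
The second integral has a smooth integrand in $t$, since $\chi'$ vanishes identically in a neighborhood of the only point $t_0$ where $\log h(\cdot+i\lambda)$ is singular; iterating integration by parts then bounds it by $O(n^{-N})$ for every $N$. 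The analysis therefore reduces to the first integral.

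Next I would decompose the logarithmic derivative. Writing $h(z)=(z-z_0)^k g(z)$ near $z_0$ with $g$ analytic and nonvanishing, we have $h'(z)/h(z) = k/(z-z_0) + \psi(z)$ with $\psi$ analytic across $\Im z = \lambda$. Using the convention $\log h(t+i\lambda)=\lim_{\e\to 0^+}\log h(t+i\lambda-i\e)$, the Sokhotski--Plemelj formula yields the distributional identity
$$
\frac{h'(t+i\lambda)}{h(t+i\lambda)} = k\,\mathrm{PV}\frac{1}{t-t_0} + i\pi k\,\delta(t-t_0) + \psi(t+i\lambda).
$$
I would then evaluate the three resulting contributions: the $\delta$-contribution is exactly $i\pi k e^{int_0}$; the smooth $\psi$-contribution is $O(n^{-\infty})$ by repeated integration by parts; and for the principal value, after translating $s=t-t_0$ and using that $\tilde\chi(s):=\chi(s+t_0)\equiv 1$ on some $|s|<\delta_0$, one splits into an inner principal value $\mathrm{PV}\int_{-\delta_0}^{\delta_0} s^{-1} e^{ins}\,ds = 2i\,\mathrm{Si}(n\delta_0) = i\pi + O(n^{-1})$ plus a tail integral over $|s|>\delta_0$ with smooth integrand $\tilde\chi(s)/s$, which is $O(n^{-\infty})$. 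Summing the three pieces gives $2\pi i k e^{int_0} + O(n^{-1})$, and dividing by $-in$ produces the claimed $-(2\pi k/|n|)e^{int_0} + O(n^{-2})$.

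The main subtlety is the correct implementation of the Sokhotski--Plemelj identification for $k/(z-z_0)$, whose sign depends crucially on the convention (boundary value from below for $\Im z=+\lambda$ versus from above for $\Im z=-\lambda$). For the second case of the lemma the delta-contribution changes sign due to the opposite boundary limit, while the principal value also picks up a sign from $\sgn(n)$; both sign changes combine to yield the same final answer $-(2\pi k/|n|)e^{int_0} + O(n^{-2})$. Everything else is routine bookkeeping.
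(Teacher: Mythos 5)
Your argument is correct, and it reaches the asymptotics by a genuinely different route from the paper. The paper splits $\log h(t+i\lambda)=\log|h|+i\Im\log h$ and treats the two pieces separately: the real part is written as $k\log|t-t_0|$ plus a smooth remainder and handled with the classical asymptotic $\int_{-1}^1\log|t|\,e^{ixt}\,dt=-\pi/|x|+O(x^{-2})$, while the imaginary part is integrated by parts once and the resulting boundary term is evaluated using an explicit small-contour computation of the jump of $\Im\log h$ across $t_0$ (Lemma~\ref{l:jump}); each piece contributes $-\pi k e^{int_0}/|n|$. You instead integrate by parts on the full $\log h$ and read off the distributional derivative via Sokhotski--Plemelj, so that the principal value plays the role of the paper's $\log|t|$ asymptotic and the delta function plays the role of the paper's jump term. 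This is essentially the same cancellation-free computation repackaged: your version is more unified and makes the provenance of the factor $2$ transparent, at the cost of requiring the (easy but worth stating) fact that the distributional $t$-derivative of the boundary value $\log h(t+i\lambda-i0)$ is the boundary value of $h'/h$, which follows from $L^1_{\mathrm{loc}}$ convergence of $\log h(t+i\lambda-i\e)$; the paper's version avoids distribution theory entirely. Two small bookkeeping points: your claim that the tail integral over $|s|>\delta_0$ is $O(n^{-\infty})$ is not literally right, since integrating by parts there produces boundary terms of size $O(n^{-1})$ at $s=\pm\delta_0$ (these in fact cancel the $-2i\cos(n\delta_0)/(n\delta_0)$ correction in $2i\,\mathrm{Si}(n\delta_0)$, and in any case $O(n^{-1})$ accuracy for the principal-value piece suffices after dividing by $-in$); and one must check, as you implicitly do, that the sign conventions for the boundary value (from below on $\Im z=+\lambda$, from above on $\Im z=-\lambda$) match those fixed in the paper so that the delta term enters with the correct sign in each case. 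Neither point affects the conclusion.
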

\begin{proof}
  We consider the first case, the second follows similarly. 

  Selecting $\chi(t)$ with sufficiently small support we ensure that,
  within the support of $\chi$, $h(t+i\lambda)$ vanishes only at
  $t=t_0$. We then have
\begin{equation}\label{e:chi_int_1}
\int \chi(t)\log [h(t+i\lambda)]e^{int}dt=\int\chi(t)\left( \log |h(t+i\lambda)|+i\Im \log[h(t+i\lambda))]\right)e^{int}dt
\end{equation}
and
\begin{equation}\label{e:chi_int_2}
\int \chi(t)\log |h(t+i\lambda)|e^{int}dt=\int\chi(t)\left( k\log |t-t_0|+\log |t-t_0|^{-k}|h(t+i\lambda)|\right)e^{int}dt.
\end{equation}
Since $|t-t_0|^{-k}|h(t+i\lambda)|$ is smooth and bounded away from
zero on the support of $\chi$, the second term in~\eqref{e:chi_int_2}
is $O(n^{-\infty})$.

Taking real parts in the asymptotic
formula~\cite[p. 381]{bender2013advanced} we obtain
\begin{equation}
\label{e:logInt}
\int_{-1}^1\log |t|e^{ixt}dt= -\frac{\pi}{|x|}+O(x^{-2}),\qquad x\to \infty.
\end{equation}
Then, using~\eqref{e:logInt} together with the fact that $\log 1=0$ we
may approximate the first term on the right-hand side
of~\eqref{e:chi_int_2} by
$$
\int\chi(t)\log |h(t+i\lambda)|e^{int}dt= -\pi ke^{int_0}\frac{1}{|n|}+O(n^{-2}),
$$

Let us now estimate the second term on the right-hand side
of~\eqref{e:chi_int_1}. We have
\begin{align*}
&\int\chi(t)i\Im \log[h(t+i\lambda))]e^{int}dt\\
&=\int_{0}^{t_0}i\chi(t)\Im \log [h(t+i\lambda))]e^{int}dt+\int_{t_0}^{2\pi}i\chi(t)\Im \log[h(t+i\lambda))]e^{int}dt\\
&=-n^{-1}\Big(\int_{0}^{t_0}\partial_t(\chi(t)\Im \log [h(t+i\lambda)])e^{int}dt+\int_{t_0}^{2\pi}\partial_t(\chi(t)\Im \log [h(+-i\lambda)])e^{int}dt\Big)\\
&\qquad -n^{-1}(e^{int_0}(\lim_{t\to t_0^+}\Im \log [h(t+i\lambda)])-\lim_{t\to t_0^-}\Im \log[h(t+i\lambda)])\\
&=-n^{-1}(e^{int_0}(\lim_{t\to t_0^+}\Im \log [h(t+i\lambda)])-\lim_{t\to t_0^-}\Im \log[h(t+i\lambda)])+O(n^{-2})\\
&=-k\pi n^{-1}e^{int_0}+O(|n|^{-2})
\end{align*}
where in the last equality Lemma~\ref{l:jump} was used.
\end{proof}

We may now complete the proof of Lemma~\ref{l:Bn}. Let
$0\leq t_1<t_2<\dots <t_M<2\pi$ denote the zeroes of $h(t+i\lambda)$
as a function of $t$, and let $k_j$ ($0\leq j\leq M$) denote the
vanishing order at $t=t_j$. Then, by Lemma~\ref{l:integrate}, for
$\chi_j$ supported close enough to $t_j$ with $\chi_j\equiv 1$ near
$t_j$, and $n>0$,
$$
\int \chi_j(t)\log h(t+i\lambda)e^{int}dt= -\frac{2\pi k_je^{int_j}}{|n|}+O(n^{-2}).
$$
By shrinking the support of $\chi_j$, we may assume that $\supp \chi_j\cap \chi_\ell=\emptyset$ for $\ell\neq j$. Then, since $\chi_j\equiv 1$ near $t_j$,  $(1-\sum_j \chi_j(t)))\log h(t+i\lambda)\in C^\infty(S^1)$ and hence
$$
\int (1-\sum_j \chi_j(t)))\log h(t+i\lambda)e^{int}dt=O(n^{-\infty}).
$$
Thus in view of equation~\eqref{e:log-comp} we obtain
$$
B_n(x_1,x_2,\lambda(x_1,x_2)) = \int \log h(t+i\lambda)e^{int}dt=-\frac{2\pi}{|n|}\sum_{j=1}^M k_je^{int_j}+O(n^{-2})
$$

Proceeding by contradiction, assume that
\begin{equation}
\label{e:toContradict}
\limsup_{n\to +\infty }n|B_n(x_1,x_2,\lambda(x_1,x_2))|=0.
\end{equation}
Then in particular, 
$$
\lim_{n\to +\infty} \sum_{j=1}^Mk_je^{int_j}=0.
$$
But we note that
\begin{align*}
\lim_{N\to \infty}\frac{1}{N}\sum_{n=0}^{N-1}\Big|\sum_{j=1}^Mk_je^{int_j}\Big|^2&
=\sum_{j=1}^Mk_j^2+\lim_{N\to \infty}\frac{1}{N}\sum_{j\neq \ell}\sum_{n=0}^{N-1}k_jk_\ell e^{in(t_j-t_\ell)}\\
&=\sum_{j=1}^Mk_j^2+\lim_{N\to \infty}\frac{1}{N}\sum_{j\neq \ell}k_jk_\ell\frac{1-e^{iN(t_j-t_\ell)}}{1-e^{i(t_j-t_\ell)}}=\sum_{j=1}^Mk_j^2>0.
\end{align*}
Recalling that
$$
\limsup_{N\to \infty}\frac{1}{N}\sum_{n=0}^{N-1}a_n\leq \limsup_{n\to \infty} a_n
$$
we obtain
$$
\lim_{n\to \infty} \sum_{j=1}^Mk_je^{int_j}\neq 0.
$$
which contradicts~\eqref{e:toContradict}.

If $h(t+i\lambda)$ does not vanish anywhere, then $h(t-i\lambda)$ vanishes at some $0\leq t_1<t_2<\dots <t_M<2\pi$ and we may repeat the argument this time considering 
$$
B_n(x_1,x_2,\lambda(x_1,x_2))=\int \log h(t-i\lambda) e^{int}dt,\qquad n<0.
$$
and taking the limit as $n\to -\infty$.

\bibliography{biblio}
\bibliographystyle{alpha}

\end{document}